\numberwithin{equation}{section}
\newtheorem{thm}[equation]{Theorem}
\newtheorem{prop}[equation]{Proposition}
\newtheorem{lem}[equation]{Lemma}
\newtheorem{cor}[equation]{Corollary}
\theoremstyle{definition}
\newtheorem{rem}[equation]{Remark}
\newtheorem{example}[equation]{Example}
\newtheorem{dfn}[equation]{Definition}
\newtheorem{ntt}[equation]{}
\newcommand{\codim}{\operatorname{codim}}
\newcommand{\U}{\mathcal{U}}
\newcommand{\SB}{\mathop{\mathrm{SB}}}
\newcommand{\Br}{\mathop{\mathrm{Br}}}
\renewcommand{\Im}{\mathop{\mathrm{Im}}}
\newcommand{\Pic}{\mathop{\mathrm{Pic}}}
\newcommand{\ind}{\mathop{\mathrm{ind}}}
\newcommand{\CH}{\mathop{\mathrm{CH}}\nolimits}
\newcommand{\PGSp}{\operatorname{\mathrm{PGSp}}}
\newcommand{\Spin}{\operatorname{\mathrm{Spin}}}
\newcommand{\PGU}{\operatorname{\mathrm{PGU}}}
\newcommand{\PGO}{\operatorname{\mathrm{PGO}}}
\newcommand{\PGL}{\operatorname{\mathrm{PGL}}}
\newcommand{\SL}{\operatorname{\mathrm{SL}}}
\newcommand{\E}{\mathrm{E}}
\newcommand{\ff}{\mathbb{F}}
\newcommand{\OGr}{\mathrm{OGr}}
\newcommand{\Ch}{\mathop{\mathrm{Ch}}\nolimits}
\newcommand{\res}{\mathop{\mathrm{res}}\nolimits}
\newcommand{\cores}{\mathop{\mathrm{cor}}\nolimits}
\newcommand{\pr}{\operatorname{\mathrm{pr}}}
\newcommand{\mult}{\operatorname{mult}}
\newcommand{\Char}{\mathop{\mathrm{char}}\nolimits}
\newcommand{\zz}{\mathbb{Z}}
\newcommand{\Z}{\mathbb{Z}}
\newcommand{\F}{\mathrm{F}}
\newcommand{\G}{\mathrm{G}}
\newcommand{\A}{\mathrm{A}}
\newcommand{\B}{\mathrm{B}}
\newcommand{\PP}{\mathbb{P}}
\newcommand{\qq}{\mathbb{Q}}
\newcommand{\C}{\mathrm{C}}
\newcommand{\Spec}{\operatorname{Spec}}
\newcommand{\End}{\operatorname{End}}
\newcommand{\Corr}{\operatorname{Corr}}
\newcommand{\Aut}{\operatorname{Aut}}
\newcommand{\pt}{\mathrm{pt}}
\newcommand{\Sum}{\operatornamewithlimits{\textstyle\sum}}
\newcommand{\Oplus}{\operatornamewithlimits{\textstyle\bigoplus}}
\newcommand{\disc}{\operatorname{disc}}
\newcommand{\CM}{\operatorname{CM}}
\newcommand{\D}{\mathrm{D}}
\newcommand{\BC}{\mathrm{BC}}
\renewcommand{\phi}{\varphi}
\title
[$J$-invariant of linear algebraic groups of outer type]
{$J$-invariant of linear algebraic groups of outer type}
\keywords
{Linear algebraic groups, twisted flag varieties, motives.}
\subjclass[2010]{20G15, 14C15}
\author
[Nikita Geldhauser]
{Nikita Geldhauser}
\author
[Maksim Zhykhovich]
{Maksim Zhykhovich}
\address{Geldhauser:
Mathematisches Institut der Universit\"at M\"unchen, Theresienstr. 39, D-80333 M\"unchen, Germany}
\email{geldhauser@math.lmu.de}
\address{Zhykhovich:
Mathematisches Institut der Universit\"at M\"unchen, Theresienstr. 39, D-80333 M\"unchen, Germany}
\email{zhykhovich@math.lmu.de}
\thanks{The work of the authors is supported by the DFG research grant SE 1721/4-1.}
\begin{document}

\begin{abstract}
We extend the notion of the $J$-invariant to arbitrary semisimple linear algebraic groups and provide complete decompositions for the normed Chow motives of all generically quasi-split twisted flag varieties. Besides, we establish some combinatorial patterns for normed Chow groups and motives and provide some explicit formulae for values of the $J$-invariant. 
\end{abstract}

\maketitle

\section{Introduction}

In the present article we investigate the Chow motives of twisted flag varieties for semisimple linear algebraic groups of outer type.

Chow motives were introduced by Alexander Grothendieck in the 1960s and since then they became a fundamental tool for investigating the structure of algebraic varieties.
Motives have applications to classical mathematical problems. For example, Voevodsky's proof of the Milnor conjecture \cite{Vo03b} relies on Rost's computation of
the motive of a Pfister quadric \cite{Ro98}.

More recently Geldhauser, Petrov, and Zainoulline established the structure of Chow motives of generically
split twisted flag varieties and introduced an invariant of algebraic groups, called the $J$-invariant (see \cite{PSZ08}, \cite{PS10}, \cite{PS12}).
In the case of quadratic forms an equivalent
invariant was introduced previously by Vishik in \cite{Vi05}.

The $J$-invariant was an important tool to solve several long-standing problems. For example, it plays an important role in the progress on the Kaplansky problem about possible values of the $u$-invariant of fields by Vishik \cite{Vi07} and in the solution of a problem of Serre about groups of type $\E_8$ and its finite subgroups \cite{S16}, \cite{GS10}. More recently, Geldhauser and Petrov generalized the $J$-invariant for groups of inner type to arbitrary oriented cohomology theories in the sense of Levine--Morel \cite{LM} satisfying some axioms (see \cite{PS21}). 

The $J$-invariant is a discrete invariant which describes the
motivic behavior of the variety of Borel subgroups of a semisimple linear algebraic group. 
Let $G$ be a split semisimple group over a field $F$, let $B$ be a Borel subgroup of $G$, let $E$ be a $G$-torsor, and let $p$ be a prime number. 
It turns out that the Chow motive of the twisted flag variety $E/B$ with coefficients in $\ff_p$ decomposes into a direct sum of Tate twists of an indecomposable motive $R_p(E)$ and the Poincar\'e polynomial of $R_p(E)$ over a splitting field of $E$ is a product of ``cyclotomic-like'' polynomials. More precisely, it equals 

\begin{equation}\label{f11}
\prod_{i=1}^r\frac{t^{d_ip^{j_i}}-1}{t^{d_i}-1}\in\zz[t],
\end{equation}
for some integers $r$, $d_i$ and $j_i$. The parameters $r$ and $d_i$ are combinatorial and are contained essentially in a table of Kac \cite{Kac85} and the $r$-tuple
$(j_1,\ldots,j_r)$ is of geometric nature and constitutes the $J$-invariant of $E$.

For groups of outer type, i.e., when $G$ is quasi-split, but not split, the situation is more subtle. Recently Fino introduced the $J$-invariant for Hermitian forms (see \cite{Fi19}).  Besides, Victor Petrov announced several years ago a computation of Chow motives of the varieties of Borel subgroups for exceptional groups of outer type.

In the present article we extend the $J$-invariant to all semisimple algebraic groups. Our idea was that it should be an analogous direct summand $R_p(E)$ for groups of outer type whose Poincar\'e polynomial has cyclotomic type~\eqref{f11}. Note first of all that it is not true anymore in general that if $G$ is a group of outer type, then the Chow motive of $E/B$ is a direct sum of Tate twists of the same indecomposable motive $R_p(G)$. Moreover, our motivic computations showed that the most numerological patterns, which occur for groups of inner type, do not apply for Chow motives for groups of outer type.

It turns out that for groups of outer type it is more natural to consider the ``normed'' Chow motives instead. Such motives have been already considered before in the literature, notably by Karpenko and Fino. Moreover, there are parallels between normed Chow motives and isotropic Chow motives of Vishik \cite{Vi24}. We recall the definition of normed Chow rings and motives in Section~\ref{sec5}.

The classical Solomon theorem for groups of inner type allows to compute the Poincar\'e polynomials of split flag varieties (see Proposition~\ref{solo}a). For groups of outer type there are analogous formulae for normed Chow groups (see Proposition~\ref{solo}b,c). Actually, the formulae for normed Chow groups are essentially contained in the classical book of Carter \cite{Car} and 
coincide in essence with formulae for orders of twisted finite simple groups of Lie type.

In Section~\ref{sec7} we introduce the $J$-invariant for groups of outer type and provide a decomposition for the normed Chow motive of $E/B$ (see Theorem~\ref{prop612}). Note that the normed Chow motive of $E/B$ decomposes into a direct sum of Tate twists of the same indecomposable motive and  its Poincar\'e polynomial indeed has cyclotomic type~\eqref{f11}. One of our main goals is Table~\ref{tab5} (analogous to Table~\ref{jinv} in the inner case) describing the parameters and restrictions on the $J$-invariant for groups of outer type.

Finally, we provide formulae for exact values of the entries of the $J$-invariant of degrees $1$ and $2$ for absolutely simple groups and relate these values to the Tits algebras of~$E$. More precisely, it is well known that the groups of type $^2\A_n$ correspond to central simple algebras over a quadratic field extension of the base field together with a unitary involution, and the groups of type $^2\D_n$ correspond to central simple algebras over the base field with an orthogonal involution of a non-trivial discriminant. Splitting the respective central simple algebra reduces these groups to Hermitian and resp. to quadratic forms. The latter algebraic structures are considered to be ``less complex'' compared to the situation when the algebra is present.

We show that in both cases the entries of the $J$-invariant of degrees bigger than $2$ do not change when we generically split the respective central simple algebras.
Moreover, we explicitly compute the degree one and degree two entries of the $J$-invariant. In the case $^2\A_n$ the degree one entry is related to the discriminant algebra of the respective unitary involution and the degree two entry is related to the underlying central simple algebra. In the case $^2\D_n$ we have an opposite situation, when the degree one entry of the $J$-invariant is related to the underlying central simple algebra and the degree two entry is related to the Clifford algebra of the orthogonal involution
 (see Proposition~\ref{prounit}, Corollary~\ref{cor67}, Proposition~\ref{prop69}, Corollary~\ref{cor718}). Analogous statements for groups of inner type were obtained in \cite{Zh} by the second author.

In our proofs we use inter alia the Rost invariant, Karpenko's upper motives, Fino's computations of the $J$-invariant of Hermitian forms, and Vishik's computations of the Chow rings of orthogonal Grassmannians.

\section{Preliminaries on linear algebraic groups of outer type}\label{sec2}
Let $G$ be a semisimple linear algebraic group over $F$.
There is the induced action of the absolute Galois group of $F$ on the Dynkin diagram of $G$ and on the Weyl group $W$ of $G$, which
is called the $*$-action  (see \cite{Bor}, \cite{KMRT}, \cite{Spr}).

A {\it twisted flag variety} of $G$ over $F$ is the variety of parabolic subgroups of $G$ of some fixed type.
The type of a parabolic subgroup of $G$ is a subset of the set of vertices of the Dynkin diagram of $G$
invariant under the $*$-action,
and we write $X_\Psi$ for the variety of parabolic subgroups of $G$ of type $\Psi$. Under this identification
the variety of Borel subgroups has type $\emptyset$.

We say that $G$ is a group of {\it inner type}, if
the group $G$ is a twisted form of a split
group $G'$ over $F$ by an element $\xi$ in the image of $H^1(F,G'/C)\to H^1(F,\Aut(G'))$, where $C$
is the center of $G'$ (the map is induced by taking the inner automorphisms) or equivalently if the $*$-action of
the absolute Galois group of $F$ on the Dynkin diagram of $G$ is trivial. We say that $G$ is a group
of {\it outer type}, if $G$ is not of inner type.

In this article we often consider a minimal field extension $K$ of $F$ such that the group $G_K$ is of inner type. This field is a subfield of the separable closure $F_{\mathrm{sep}}$ of $F$ corresponding to the kernel of the $*$-action on $G$.

Let $X$ be a twisted flag variety over $F$. By $\overline X$ we denote the variety $X_L$ over a splitting field
$L$ of the group $G$. The Chow ring $\CH(\overline X)$ does not depend on the choice of $L$ and,
therefore, we do not specify the splitting fields in the formulae below. Note also that
the variety $\overline X$ is irreducible.

The absolute Galois group $\Gamma$ of $F$
acts on $\CH(\overline X)$ via the $*$-action. Namely, $\CH(\overline X)$ is a free abelian group with a basis
given by the Schubert subvarieties. The Schubert subvarieties are parametrized by the left cosets $W/W_{\Psi}$,
where $W$ is the Weyl group of $G$ generated by the simple reflections $s_i$ and $W_{\Psi}$ is the subgroup of $W$
generated by $s_i$ with $i\in\Psi$, where $\Psi$ is the type of $X$. We identify these left cosets
with their (unique) representatives of the minimal length. Then for $\sigma\in\Gamma$ the Schubert variety corresponding to
$s_{i_1}\cdots s_{i_n}$ maps to $(\sigma*s_{i_1})\cdots(\sigma*s_{i_n})$.

We say that a twisted flag $G$-variety $X$ is generically split (resp. generically quasi-split), if the group $G_{F(X)}$ is split (resp. quasi-split).

\section{Preliminaries on the category of Chow motives}\label{sec3}
Denote by $\CM(F,\ff_p)$
the category of the effective Grothendieck's Chow motives over a field $F$ with coefficients
$\ff_p$ for a fixed prime number $p$ (see \cite{Ma68}, \cite{EKM}).

For a smooth projective variety $X$ over $F$ we denote by $M(X)$ the motive of $X$ in this category.
We consider the Chow ring $\CH(X)$ of $X$ modulo rational equivalence (see \cite{Ful}) and we write $\Ch(X)$ for $\CH(X)\otimes\ff_p$.

For a motive $M$ (resp. for a variety $X$) over a field $F$ and a field extension $E/F$ we denote by $M_E$
(resp. by $X_E$) the extension of scalars. The motive of the projective line $\mathbb{P}^1$  over $F$ decomposes into a direct sum $\ff_p\oplus\ff_p(1)$, where $\ff_p$ denotes the motive of $\Spec F$ and $\ff_p(1)$ is the complementary direct summand called the Tate motive.
By $M(n):=M\otimes\ff_p(1)^{\otimes n}$ we denote the Tate twist of $M$ by $n$.

Every motive $M$ is determined by a smooth projective variety $X$ and a projector
${\pi\in\Ch(X\times X)}$.
A motivic decomposition of a motive $M$ is a decomposition into a direct sum.
Motivic decompositions of a motive $M=(X,\pi)$ correspond to decompositions of the projector $\pi$
into a sum of (pairwise) orthogonal projectors.
The {\it realization} (resp. the {\it co-realization}) of a cycle $\beta\in\Ch(X\times X)$ is the map
$\beta_*\colon\Ch(X)\to \Ch(X)$ (resp. $\beta^*\colon\Ch(X)\to \Ch(X)$) given by $x\mapsto (p_2)_*(\pi\cdot p_1^*(x))$ (resp. $x\mapsto (p_1)_*(\pi\cdot p_2^*(x))$), where $p_i\colon X\times X\to X$ is the projection to the $i$-th factor ($i=1,2$), $p_i^*$ is the pullback, $(p_i)_*$ is the push-forward and $\cdot$ is the intersection product.
By $\circ$ we denote the composition product.

For an equidimensional smooth variety $X$ and for an element $${\alpha\in\Ch_l(X)=\Ch^{\dim X-l}(X)}$$
we write $\dim\alpha=l$ and $\codim\alpha=\dim X-l$.

For a field extension $E/F$ and a variety $Y$ over $F$ we
call a cycle $\rho\in\Ch(Y_E)$ {\it rational}, if it is defined over $F$, i.e. lies in the image of the
restriction homomorphism $$\res_{E/F}\colon\Ch(Y)\to\Ch(Y_E).$$ We say that $\rho$ is $F$-rational, if we want to stress $F$.

Let $X$ and $Y$ be smooth projective equidimensional varieties over $F$. A {\it correspondence} $\alpha\colon X\rightsquigarrow Y$
of degree $l$ is an element in $\Ch_{\dim X+l}(X\times Y)$. The {\it multiplicity} of $\alpha$ is an element $\mult(\alpha)\in\ff_p$
such that $\pr_*(\alpha)=\mult(\alpha)\cdot [X]$, where $\pr_*$ is the push-forward of the projection
$\pr\colon X\times Y\to X$. By $\Corr(X,Y)$ we denote the group of all correspondences from $X$ to $Y$ over $F$ of
degree $0$. We also write $\End_{\CM}(X)=\Corr(X,X)$. This is the group of endomorphisms of
$M(X)$ in the category of motives.

If $X$ is a twisted flag variety under a semisimple algebraic group $G$, then following Karpenko we consider the {\it upper motive} $\U(X)$
of $X$. By definition $\U(X)$ is an indecomposable direct summand of $M(X)$ such that $\Ch^0(\U(X))\ne 0$.

If $N$ is a direct summand of the motive of $X$, then we consider the Poincar\'e polynomial $P(N,t)$ of $N$ defined as $\sum_{i\ge 0}\dim\Ch^i(\overline N)\, t^i$, where $\overline N$ denotes the motive $N$ over a splitting field of $G$.

For a variety $X$ over a finite separable field extension $K/F$ we consider the {\it corestriction} $\cores_{K/F}(X)$ of $X$. This is a variety over $F$
obtained from $X$ via the composite map $$X\to \Spec K\to\Spec F.$$
As in \cite{Ka10} we consider the corestrictions of motives and correspondences. For a motive $M$ over $K$ and for
a correspondence $\alpha\colon X\rightsquigarrow Y$ over $K$ (where $X$ and $Y$ are smooth projective equidimensional
varieties over $K$) we denote the corestriction of $M$
by $\cores_{K/F}(M)$ and the corestriction of $\alpha$ by $\cores\alpha\colon\cores_{K/F}X\rightsquigarrow \cores_{K/F}Y$.
Corestriction should not be confused with the Weil restriction.

\begin{example}
Let $F=\mathbb{R}$, $E=\mathbb{C}$, and $X=\Spec E[x,y]/(ix^2+y^2+1)$.
Then $$\cores_{E/F}(X)=\Spec F[t,x,y]/(t^2+1,tx^2+y^2+1)$$ and $(\cores_{E/F}(X))_E=\Spec E[x,y]/(ix^2+y^2+1)\sqcup\Spec E[x,y]/(-ix^2+y^2+1)$.
\end{example}

Finally, we summarize some properties of the Chow motives of Weil restrictions.

Let $L/F$ be a finite separable field extension, let $K$ be a finite Galois field extension containing $L$ and let $M$ be a Chow motive over $L$. In \cite{Ka00,Ka12b,Ka15} Karpenko considers the Weil restriction functor $R_{L/F}$ for motives and, in particular, the motive $R_{L/F}(M)$ over $F$. By \cite[Lemma~2.1]{Ka15} if $M_i$ are motives over $L$, then
\begin{align}\label{weil1}
R_{L/F}(M_1\oplus\ldots\oplus M_s)=R_{L/F}(M_1)\oplus\ldots\oplus R_{L/F}(M_s)\oplus N,
\end{align}
where $N$ is a direct sum of corestrictions to $F$ of motives over fields $E$ with $F\subsetneq E \subset K$.

Moreover, for motives $M$ and $N$ over $L$ we have by \cite[Lemma~2.1]{Ka12b}
\begin{align}
R_{L/F}(M\otimes N)=R_{L/F}(M)\otimes R_{L/F}(N)
\end{align} and for a Tate motive $\ff_p(i)$ we have
\begin{align}\label{weil3}
R_{L/F}(\ff_p(i))=\ff_p(mi)
\end{align}
with $m=[L:F]$ (here the first Tate motive is over $L$ and the second Tate motive is over~$F$).

Besides, for a smooth variety $X$ over $L$ we will use in Section~\ref{sec8} a natural map $$R\colon\CH^*(X)\to\CH^{m*}(R_{L/F}(X))$$ defined in \cite{Ka00}.

\section{Background on the $J$-invariant for groups of inner type}
Let $G$ be a split semisimple algebraic group over a field $F$, let $T$ be a split maximal torus of $G$, and let $B$ be a Borel subgroup of $G$ containing $T$.

\begin{ntt}\label{sec31}
The variety $G/B$ is cellular and, therefore, the Chow group $\CH^*(G/B)$ is a free $\zz$-module. Its free generators can be parametrized by the elements of the Weyl group $W$ of $G$. More precisely, for each $w\in W$ one
associates with it the class of the Schubert subvariety
$Z_w=[\overline{Bw_0wB/B}]\in\CH^{l(w)}(G/B)$, where $w_0$ is the longest element of $W$ and $l(w)$ denotes the length of $w$.

Let $\B B$ denote the classifying space of $B$ (see \cite{EG98}, \cite{To99}). By \cite[(4.1)]{Gr58} there is a characteristic map
\begin{align}\label{f42}
c\colon\CH^*(\B B)\to\CH^*(G/B)
\end{align}
which is a ring homomorphism. Besides, the pullback of the canonical projection ${G\to G/B}$ induces a ring homomorphism $\pi\colon \CH^*(G/B)\to \CH^*(G)$.

It follows from \cite[p.~21, Rem.~$2^\circ$]{Gr58} (see also \cite[Proposition~5.1]{GiZ12})
that the sequence
\begin{equation}\label{seqq}
\CH^*(\B B)\xrightarrow{c}\CH^*(G/B)\xrightarrow{\pi} \CH^*(G)
\end{equation}
of graded rings is right exact (i.e. $\pi$ is surjective and its kernel is the ideal of $\CH^*(G/B)$ generated by the elements of positive degrees in the image of $c$). Then the explicit combinatorial description of the map $c$ given in \cite{De74} allows to compute explicitly the ring structure of $\CH^*(G)$ (see also \cite[Section~2]{PS10}, \cite[Section~5]{GPS16}).

In fact, one can replace $B$ in sequence~\eqref{seqq} by any special parabolic subgroup $P$ of $G$ (see \cite[Lemma~7.1]{PS17}). Note also that by \cite[Theorem~6.4]{KM06} the image of $c$ coincides with the subring of rational cycles for the twisted form $_E(G/B)$ (or, more generally, $_E(G/P)$ for a special parabolic subgroup $P$) under a generic $G$-torsor $E$.
\end{ntt}

\begin{ntt}[$J$-invariant for Chow motives]\label{jinvold}
For a fixed prime $p$ we denote by $\Ch^*:=\CH^*\otimes\ff_p$ the Chow ring modulo $p$.
Let $G$ be a split semisimple algebraic group over a field $F$, $B$ a Borel subgroup of $G$ and $E$ a $G$-torsor over $F$. Then
\begin{equation}\label{pres}
\Ch^*(G)\simeq \ff_p[e_1,\ldots,e_r]/(e_1^{p^{k_1}},\ldots,e_r^{p^{k_r}})
\end{equation}
for some non-negative integers $r$, $k_i$ and with $\deg e_i=:d_i$. We assume that the sequence of $d_i$ is non-decreasing.

We introduce an order on the set of additive generators 
of $\Ch^*(G)$, i.e., on the monomials $e_1^{m_1}\ldots e_r^{m_r}$. 
To simplify the notation, we denote the monomial
$e_1^{m_1}\ldots e_r^{m_r}$ by $e^M$, where $M$ is an $r$-tuple
of integers $(m_1,\ldots,m_r)$. The codimension (in the Chow ring) of
$e^M$ is denoted by $|M|$. Observe that $|M|=\sum_{i=1}^rd_im_i$.

Given two $r$-tuples $M=(m_1,\ldots,m_r)$ and $N=(n_1,\ldots,n_r)$ we say
$e^M\le e^N$ (or equivalently $M\le N$) if either $|M|<|N|$, or $|M|=|N|$ and 
$m_i\le n_i$ for the greatest $i$ such that $m_i\ne n_i$.
This gives a well-ordering on the set of all monomials ($r$-tuples).

\begin{dfn}[{\cite[Definition~4.6]{PSZ08}}]\label{def71}
Denote as $\overline{\Ch}^*(G)$ the image of the composite map
$$
\Ch^*(E/B)\xrightarrow{\res} \Ch^*(G/B) \xrightarrow{\pi} \Ch^*(G),
$$
where $\pi$ is the pullback of the canonical projection $G\to G/B$ and $\res$ is the scalar extension to a splitting field of the torsor $E$.

For each $1\le i\le r$ set $j_i$ to be the smallest non-negative
integer such that the subring $\overline{\Ch}^*(G)$ contains an element $a$ 
with the greatest monomial $e_i^{p^{j_i}}$ 
with respect to the order on $\Ch^*(G)$ as above, i.e.,
of the form 
$$
a=e_i^{p^{j_i}}+\sum_{e^M\lneq e_i^{p^{j_i}}} c_M e^M, \quad c_M\in\ff_p.
$$
The $r$-tuple of integers $(j_1,\ldots,j_r)$ is called
the {\it $J$-invariant} of $E$ modulo $p$ and is denoted by $J(E)$ or $J_p(E)$.
Note that $j_i\le k_i$ for all $i$.
\end{dfn}

In fact, in this definition one can replace the Borel subgroup $B$ by any special parabolic subgroup $P$ of $G$.

By \cite{PSZ08} the Chow motive of $E/B$ with coefficients in $\ff_p$ decomposes into a direct sum of Tate twists of an indecomposable motive $R_p(E)$, and the Poincar\'e polynomial of $R_p(E)$ over a splitting field of $E$ equals
\begin{equation}\label{fpoin}
\prod_{i=1}^r\frac{t^{d_ip^{j_i}}-1}{t^{d_i}-1},
\end{equation}
where $(j_1,\ldots,j_r)$ is the $J$-invariant of $E$.

Moreover, if the degrees $d_i$ are pairwise distinct (this happens for example when $G$ is a simple group of type different from $\D_n$), then one can uniquely recover the $J$-invariant out of the Poincar\'e polynomial~\eqref{fpoin}.

We summarize the parameters of the $J$-invariant in Table~\ref{jinv}. The restrictions on $j_i$ in the last column of the table follow from \cite[Proposition~5.12]{Vi05}, \cite[Section~4]{PSZ08}, \cite[Corollaries~8.10 and 10.4]{GPS16}. Note that the most restrictions in the last column follow from the structure of the Steenrod algebra. The Steenrod operations modulo $p$ were constructed in algebraic geometry by Voevodsky in characteristic different from $p$  (see \cite{Vo03a}, \cite{Br99}) and by Primozic in characteristic $p$ (see \cite{Pr20}). Note also that there are further restrictions on the $J$-invariant not mentioned in the table (see e.g. \cite[Proposition~8.4]{PS21} for type $\E_8$).

By $p^s\parallel m$ we denote the highest power of $p$ which divides $m$.

\begin{center}
\begin{longtable}{l|l|l|l|l|l}
\caption{Parameters of the $J$-invariant (inner type)}\label{jinv}\\
Group $G$ & $p$ & $r$ & $d_i$, $i=1\ldots r$ & $k_i$, $i=1\ldots r$ & Restrictions on $j_i$\\
\hline
$\SL_n/\mu_m,\,m|n$       & $p|m$ & $1$           &  $1$  & $p^{k_1}\parallel n$ &   \\
$\PGSp_n,\,2|n$           & $2$       & $1$                       & $1$    & $2^{k_1}\parallel n$         &  \\
$\mathrm{O}^+_n$, $n>2$       & $2$       & $[\frac{n+1}{4}]$       & $2i-1$    & $[\log_2\frac{n-1}{2i-1}]$   & if $d_i + l = 2^sd_m$ and\\
&&&&& $2\nmid\binom{d_i}{l}$, then $j_m \le j_i+s$ \\
$\Spin_{2n}^{\pm},\,2\vert n$ & $2$       & $\frac{n}{2}$         & $1,\,i=1$  &$2^{k_1}\parallel n$          & the same restrictions\\
                              &           &                  & $2i-1,\,i\ge 2$  & $[\log_2\frac{2n-1}{2i-1}]$ & \\
$\Spin_n$, $n>2$              & $2$       & $[\frac{n-3}{4}]$         & $2i+1$  & $[\log_2\frac{n-1}{2i+1}]$  & the same restrictions\\
$\PGO_{2n}^+$, $n>1$            & $2$       & $[\frac{n+2}{2}]$      & $1,\,i=1$ & $2^{k_1}\parallel n$         & the same restrictions \\
                              &           &                 & $2i-3,\,i\ge 2$  &$[\log_2\frac{2n-1}{2i-3}]$   & assuming $i,m\ge 2$\\
$\G_2$              & $2$       & $1$     & $3$ & $1$                         & \\
$\F_4$              & $2$       & $1$     & $3$ & $1$                         & \\
$\E_6$              & $2$       & $1$     & $3$ & $1$                         & \\
$\F_4$       & $3$       & $1$            & $4$  & $1$                          & \\
$\E_6^{sc}$       & $3$       & $1$       & $4$  & $1$                          & \\
$\E_7$       & $3$       & $1$            & $4$  & $1$                          & \\
$\E_6^{ad}$   & $3$       & $2$                & $1,\,4$  & $2,\,1$                     & $|j_1-j_2|\le 1$\\
$\E_7^{sc}$                   & $2$       & $3$                  & $3,\,5,\,9$ & $1,\,1,\,1$    & $j_1\ge j_2\ge j_3$\\
$\E_7^{ad}$                   & $2$       & $4$                & $1,\,3,\,5,\,9$ & $1,\,1,\,1,\,1$           & $j_2\ge j_3\ge j_4$\\
$\E_8$                        & $2$       & $4$             & $3,\,5,\,9,\,15$  & $3,\,2,\,1,\,1$             & $j_1\ge j_2\ge j_3$,\\
&&&&& $j_1\le j_2+1$,\\
&&&&&$j_2\le j_3+1$.\\
$\E_8$                        & $3$       & $2$            & $4,\,10$           & $1,\,1$                    & $j_1\ge j_2$\\
$\E_8$                        & $5$       & $1$                        & $6$   & $1$                          &\\
\end{longtable}
\end{center}
\end{ntt}

\section{Normed Chow rings and motives}\label{sec5}

Let $F$ be a field, let $K/F$ be a Galois field extension and let $X$ be a smooth variety over $F$. Following \cite{Fi19} we define $$\CH_K(X)=\CH(X)/\Im\Big(\bigoplus_{F\subsetneq E\subset K}\CH(X_E)\to\CH(X)\Big),$$ where $\CH(X_E)\to\CH(X)$ is the pushforward of the natural morphism $X_E\to X$.

The set $\Im\Big(\bigoplus_{F\subsetneq E\subset K}\CH(X_E)\to\CH(X)\Big)$ is an ideal of the ring $\CH(X)$ called the {\it normed ideal} and we call the ring $\CH_K(X)$ the {\it normed Chow ring} of $X$ (with respect to $K/F$).

From now on we assume that $[K:F]=p^n$ for some prime number $p$ and for some $n\ge 1$, since otherwise the respective normed Chow rings are zero.
Note also that then $\CH_K(X)$ is an algebra over $\CH_K(\pt)=\ff_p$
where $\pt$ denotes $\Spec F$. Furthermore, for a smooth variety $Y$ over $E$ we have ${\CH_K(\cores_{E/F}(Y))=0}$ where ${F\subsetneq E\subset K}$.

\begin{rem}
If $X$ is a split flag variety over $F$ and $K$ is a quadratic field extension of $F$, then the ring $\CH^*_K(X)$ coincides with the ordinary modulo $2$ Chow ring  $\CH^*(X)\otimes\ff_2$.
\end{rem}

We define the category of normed Chow motives exactly in the same way as the category of classical Grothendieck's Chow motives over $F$ (see \cite{Ma68})
 but replacing the usual Chow rings $\CH$ by the normed Chow rings $\CH_K$.

Let $X$ be a quasi-split twisted flag variety homogeneous under a quasi-split group $G^{qs}$ over $F$ with a Borel subgroup $B^{qs}$ defined over $F$.
Then there is still a characteristic map $$c\colon\CH^*(\B B^{qs})\to \CH^*(G^{qs}/B^{qs}).$$
Moreover, by \cite[Theorem~6.4]{KM06} the image of $c$ coincides with the subring of rational cycles for the twisted form $_E(G^{qs}/B^{qs})$ under a generic $G^{qs}$-torsor $E$ over $F$.

Furthermore, we have for an adjoint or simply connected group $G^{qs}$ that $${\CH^*(\B B^{qs})=\CH^*(\B B)^\Gamma}$$ and $\CH^*(G^{qs}/B^{qs})=\CH^*(G/B)^\Gamma$, where $\Gamma$ denotes the absolute Galois group, and $B$ and $G$ denote $B^{qs}$ and $G^{qs}$ extended to a separable closure of $F$.

Let $K$ be a Galois field extension of $F$ of $p$-primary degree such
that $G^{qs}_K$ is a group of inner type.
Passing to the normed Chow rings we obtain a characteristic map
\begin{equation}\label{normedc}
c\colon\CH_K^*(\B B^{qs})\to\CH_K^*(G^{qs}/B^{qs}).
\end{equation}

The ring $\CH_K^*(G^{qs}/B^{qs})$ is identified with the subring of $\CH^*(G/B)\otimes\ff_p$
generated by the Schubert classes $Z_w$ with $w\in W$ which are stable under $\Gamma$. In other words, taking the normed Chow ring of a quasi-split projective homogeneous variety eliminates all Schubert cycles from the ordinary Chow ring of a split variety which are not Galois invariant.

Analogous to sequence~\eqref{seqq}
we consider the sequence
\begin{equation}\label{normedc2}
\CH_K^*(\B B^{qs})\xrightarrow{c}\CH_K^*(G^{qs}/B^{qs})\to C^*
\end{equation}
where the graded ring $C^*$ is given as the quotient of the ring $\CH_K^*(G^{qs}/B^{qs})$ modulo the ideal generated by the elements of positive degrees in the image of $c$. 
We will show later in Proposition~\ref{prop77} that
one can replace $B^{qs}$ in sequence~\eqref{normedc2} by any parabolic subgroup $P$ of $G^{qs}$ such that the variety $E/P$ is generically quasi-split for a generic $G^{qs}$-torsor $E$.

Let $X$ be a twisted flag variety over $F$ and let $K$ be a Galois field extension of $F$ of $p$-primary degree that that the respective group is a group of inner type over $K$.
We define the {\it normed upper motive} $\U_K(X)$ of $X$ exactly in the same way as its ordinary upper motive (see Section~\ref{sec3}), but replacing the ordinary Chow groups by the normed Chow groups, i.e., $\U_K(X)$ is an indecomposable direct summand of the normed motive $M_K(X)$ of $X$ such that $\CH_K^0(\U_K(X))\ne 0$.

Note that in the context of normed Chow rings we will denote by $\overline X$ (and, more generally, by $\overline N$ for a direct summand $N$ of the normed motive of $X$) the variety $X$ not over a splitting field of the respective group, but over the function field $L$ of a generically quasi-split variety (e.g., the variety of Borel subgroups). Over such fields the respective group is quasi-split, but the field $K$ remains a field, i.e., $K\otimes_F L$ is still a field.

Note that in the category of normed Chow motives
every direct summand of a {\it quasi-split} twisted flag variety is a direct sum of (normed) Tate motives. Therefore, for a normed motive $N$ as in the previous paragraph we can define its normed Poincar\'e polynomial $P(\CH_K^*(N),t)$ as $\sum_{i\ge 0}\dim\CH_K^i(\overline N)\, t^i$, where the motive $\overline N$ is considered over a quasi-splitting field as above. 

\section{Some numerology of algebraic groups}

In Table~\ref{tab2} below we list torsion primes for absolutely simple algebraic groups. For groups of inner type they are characterized by the property that the characteristic map $c$ from formula~\eqref{f42} is surjective modulo $q$, if $q$ is not a torsion prime, and for groups of outer type we define the torsion primes just as divisors of the degree of the minimal field extension making the group inner.
Moreover, we do not consider the situation of mixed primes, like $^6\D_4$, since as was mentioned in Section~\ref{sec5} the normed Chow rings vanish for quasi-split projective homogeneous varieties
of respective types.

\begin{center}
\begin{longtable}{c|c}
\caption{Torsion primes}\label{tab2}\\
\hline
$\A_m$ & $p\mid (m+1)$\\
$\B_m,\C_m,\D_m,\G_2$ & $2$\\
$\F_4,\E_6,\E_7$ & $2,3$\\
$\E_8$ & $2,3,5$\\
\hline
${}^2\A_m, {}^2\D_m, {}^2\E_6$ & $2$\\
${}^3\D_4$ & $3$
\end{longtable}
\end{center}

Table~\ref{tab3} below contains the well-known degrees of fundamental polynomial invariants (see \cite[Corollary~10.2.4 and Proposition~10.2.5]{Car}). These numbers appear in formulae for Poincar\'e polynomials for split flag varieties (see Proposition~\ref{solo} below).

\begin{center}
\begin{longtable}{c|c}
\caption{Degrees of fundamental polynomial invariants}\label{tab3}\\
Dynkin type & $e_i$\\
\hline
$\mathrm{A}_m$ & $2,3,\ldots,m+1$\\
$\mathrm{B}_m,\mathrm{C}_m$ & $2,4,\ldots, 2m$\\
$\mathrm{D}_m$ & $2,4,\ldots, 2m-2,m$\\
$\mathrm{E}_6$ & $2,5,6,8,9,12$\\
$\mathrm{E}_7$ & $2,6,8,10,12,14,18$\\
$\mathrm{E}_8$ & $2,8,12,14,18,20,24,30$\\
$\mathrm{F}_4$ & $2,6,8,12$\\
$\mathrm{G}_2$ & $2,6$\
\end{longtable}
\end{center}

For groups of outer type one can also construct a table containing analogous integers (cf. \cite[Theorem~14.3.2]{Car}). We summarize them in Table~\ref{tab4} below. Note that the numbers $e_i^+$ in the table are nothing else than the degrees of fundamental polynomial invariants for the respective folded root systems. Note also that the numbers from the last two columns of Table~\ref{tab4} taken together give precisely the degrees of fundamental polynomial invariants for respective groups of inner type from Table~\ref{tab3}.

\begin{center}
\begin{longtable}{c|c|c|c}
\caption{Degrees of fundamental polynomial invariants (outer type)}\label{tab4}\\
Dynkin type & Folded root system & $e^+_i$ & $e^-_i$ (resp. $e^{\omega}_i, e^{\omega^2}_i$ in \\
 &  &  & the trialitarian case)   \\
\hline
    ${}^2\mathrm{A}_m$ & $\mathrm{C}_{(m+1)/2}$ ($m$ odd),  & all even numbers   & all odd numbers \\
     & $\BC_{m/2}$ ($m$ even) & between $2$ and $m+1$ &  between $3$ and $m+1$\\
${}^2\mathrm{D}_m$ & $\B_{m-1}$ & $2,4,\ldots, 2m-2$ & $m$\\
${}^2\mathrm{E}_6$ & $\F_4$ & $2,6,8,12$ & $5,9$\\
${}^3\mathrm{D}_4$
& $\G_2$ & $2,6$ & $4,4$
\end{longtable}
\end{center}

Let $G$ be a simply-connected semisimple quasi-split algebraic group over $F$, let $K$ be a Galois field extension such that $G$ is of inner type over $K$, and let $P$ be a parabolic subgroup of $G$ defined over $F$. Then by \cite[Theorem~26.8]{KMRT} the group $G$ is isomorphic to a (finite) product of the Weil restrictions $R_{L_i/F}(G_i)$ where $L_i/F$ is a finite separable extension of $F$ and $G_i$ is an absolutely simple quasi-split group over $L_i$.

Furthermore, the variety $G/B$ is isomorphic to the product of varieties $R_{L_i/F}(G_i)/B_i$ where $B_i$ denotes the Borel subgroup of $R_{L_i/F}(G_i)$. Thus, the Poincar\'e polynomial $P(\CH^*_K(G/B),t)\in\Z[t]$ of $\CH^*_K(G/B)$ equals the product of the Poincar\'e polynomials $P(\CH^*_K(R_{L_i/F}(G_i)/B_i),t)$.

Considering the natural projection $G/B\to G/P$ with fiber $P/B$ we have that $$P(\CH^*_K(G/P),t)=P(\CH^*_K(G/B),t)/\CH(\CH^*_K(P/B),t).$$ Moreover, $P/B\simeq [L(P),L(P)]/B\cap [L(P),L(P)]$ where $L(P)$ denotes the Levi part of~$P$. Thus, combining the above observations to compute the Poincar\'e polynomial of $\CH_K^*(G/P)$
we can reduce to the case when the group $G$ is simple and $P$ is a Borel subgroup of $G$.

\begin{prop}[Extended Solomon theorem]\label{solo}
Let $G$ be a simple quasi-split group over~$F$, let $B$ be a Borel subgroup of $G$ defined over $F$ and let $K$ be a finite Galois field extension such that the group $G_K$ is of inner type.

a) If $G$ is a simple algebraic group of inner type over $F$, then the Poincar\'e polynomial $P(\CH^*(G/B),t)$ equals 
\begin{equation}\label{solom}
\prod \dfrac{t^{e_i}-1}{t-1}
\end{equation}
where $e_i$ are degrees of the fundamental polynomial invariants from Table~\ref{tab3}.

b) If $G$ is an absolutely simple group of outer type over $F$ different from ${}^3\D_4$ and ${}^6\D_4$, then the Poincar\'e polynomial of $\CH^*_K(G/B)$ equals
\begin{equation}\label{solom2}
\prod \dfrac{t^{e^+_i}-1}{t-1}\cdot \prod \dfrac{t^{e^-_i}+1}{t+1}
\end{equation}
where $e^+_i$ and $e^-_i$ are taken from Table~\ref{tab4}.

c) If $G={}^3\D_4$,
then the Poincar\'e polynomial of $\CH^*_K(G/B)$ equals $$\dfrac{t^2-1}{t-1}\cdot \dfrac{t^6-1}{t-1}\cdot \dfrac{t^4-\omega}{t-\omega}\cdot \dfrac{t^4-\omega^2}{t-\omega^2}=\dfrac{t^2-1}{t-1}\cdot \dfrac{t^6-1}{t-1}\cdot\dfrac{t^8+t^4+1}{t^2+t+1},$$
where $\omega=e^{2\pi i/3}$ (see Table~\ref{tab4}).

d) Let $G=R_{E/F}(G')$ for a non-trivial finite separable field extension $E/F$ and an absolutely simple algebraic quasi-split group $G'$ over $E$. Assume that $[K:F]$ is a power of a prime number $p$. Let $B'$ be a Borel subgroup of $G'$ defined over $E$ and $m=[E:F]$.

If $G'$ is split over $E$, then the Poincar\'e polynomial $P(\CH^*_K(G/B),t)$ is equal to the polynomial $P(\CH^*(G'/B'),t^m)$.

If $G'$ is quasi-split but not split over $E$, then the Poincar\'e polynomial $P(\CH^*_K(G/B),t)$ equals $P(\CH_K^*(G'/B'),t^m)$. 
\end{prop}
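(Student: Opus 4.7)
The plan for (a) is to cite the classical Solomon identity: since $G$ is split, $G/B$ admits a cellular Schubert decomposition indexed by $W$ with cell of dimension $\ell(w)$, hence $P(\CH^*(G/B),t) = \sum_{w\in W} t^{\ell(w)}$, and this equals $\prod_i(t^{e_i}-1)/(t-1)$ by Solomon's theorem (see e.g.\ \cite[Theorem~10.2.4]{Car}).

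For parts (b) and (c), the first step is to identify the additive structure of $\CH^*_K(G^{qs}/B^{qs})$. The $*$-action of $\Gamma=\Gal(K/F)$ permutes the Schubert classes on $\overline{G/B}$ by $\tau\cdot Z_w = Z_{\tau w}$, and since $T^{qs}$ is a special torus we have $\CH^*(G^{qs}/B^{qs}) = \CH^*(\overline{G/B})^{\Gamma}$. In both (b) and (c) the extension $K/F$ has prime degree $p\in\{2,3\}$, so there are no strict intermediate subfields and the normed-Chow ideal is generated by the single pushforward attached to $\Spec K \to \Spec F$. This pushforward sends $Z_w$ to $p\cdot Z_w$ when $w\in W^\tau$ and to the full $\Gamma$-orbit sum of $Z_w$ otherwise. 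Passing to the quotient therefore yields
\[
\CH^*_K(G^{qs}/B^{qs}) \;=\; \bigoplus_{w\in W^\tau} \ff_p\cdot Z_w,
\]
so the Poincaré polynomial equals $\sum_{w\in W^\tau} t^{\ell(w)}$ with lengths measured in $W$.

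What remains is the purely combinatorial identity
\[
\sum_{w\in W^\tau} t^{\ell(w)} \;=\; \prod_{i} \frac{t^{e_i}-\zeta_i}{t-\zeta_i},
\]
where $\zeta_i$ is the eigenvalue of $\tau$ on the $i$-th fundamental polynomial invariant of $W$. For $\tau$ of order two one has $\zeta_i=+1$ on the invariants of degree $e_i^+$ and $\zeta_i=-1$ on the anti-invariants of degree $e_i^-$, which recovers (b); for the triality $\tau$ on $\mathrm{D}_4$ the eigenvalues on the four degrees $2,4,4,6$ are $1,\omega,\omega^2,1$, which recovers (c). This is the twisted Solomon identity, which is the $q\to t$ analogue of Steinberg's and Carter's computation of the orders of the twisted finite groups of Lie type; see \cite[Chapter~14]{Car}. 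Failing a direct citation, a case-by-case verification on the finite list ${}^2\mathrm{A}_m,{}^2\mathrm{D}_m,{}^2\mathrm{E}_6,{}^3\mathrm{D}_4$ suffices.

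For part (d) I would apply Karpenko's Weil restriction formulae \eqref{weil1}--\eqref{weil3} to the motive of $G'/B'$ over $E$. If $G'$ is split over $E$, then $M(G'/B')$ is a sum of Tate motives $\mathbb{T}(\ell(w))$ indexed by the Weyl group of $G'$, and \eqref{weil1},\eqref{weil3} give
\[
M(R_{E/F}(G'/B')) \;=\; \bigoplus_{w} \mathbb{T}(m\ell(w)) \;\oplus\; N,
\]
with $N$ a direct sum of corestrictions from strict intermediate subfields. Since corestrictions vanish in the normed Chow ring, the Poincaré polynomial equals $\sum_w t^{m\ell(w)} = P(\CH^*(G'/B'),t^m)$. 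When $G'$ is only quasi-split, the same argument applies using the Tate decomposition of the normed Chow motive of $G'/B'$ furnished by (b) and (c). The principal technical obstacle in the whole proof is the twisted Solomon identity, but this is a classical result available in Carter's book, so the rest is formal.
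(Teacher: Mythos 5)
Your proposal is correct and follows essentially the same route as the paper: part (a) by Solomon's theorem, parts (b) and (c) by identifying the normed Poincar\'e polynomial with $\sum_{w\in W^\tau} t^{\ell(w)}$ and invoking the twisted Solomon/Steinberg identity from Carter's Chapter~14, and part (d) via Karpenko's Weil-restriction formulae together with the vanishing of corestrictions in the normed Chow ring. The only difference is cosmetic: you re-derive in detail the fact that $\CH^*_K(G^{qs}/B^{qs})$ has an $\ff_p$-basis of $\Gamma$-fixed Schubert classes, which the paper simply refers back to its Section~5 discussion before citing Carter's Proposition~14.2.1 and Section~14.3.
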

\begin{proof}
Note that the coefficient of the polynomial $P(\CH^*(G/B),t)$ of $t^u$ equals the cardinality of the set $\{w\in W\mid \ell(w)=u\}$ where $W$ is the Weyl group of $G$ and $\ell$ is the length function.
Thus, the classical Solomon theorem (see \cite[Section~9.4A]{Car}) precisely gives the desired formula~\eqref{solom}.

For part~b) note that the Poincar\'e polynomial of the normed Chow group is given by the same formulae as the usual one but with the Weyl group replaced by the Weyl group $W^1$ of the folded root system defined in \cite[Section~13.1]{Car} (see also \cite[Section~13.3.8]{Car}). Therefore, formula~\eqref{solom2} follows from \cite[Proposition~14.2.1]{Car} together with computations of all parameters of Proposition~14.2.1 given in \cite[Section~14.3]{Car}. 

Part~c) follows from \cite[Proposition~14.2.1]{Car} or can be computed directly using formulae for the $*$-action given in Section~\ref{sec2}.

Part~d) follows from the properties of the Weil restrictions summarized in Section~\ref{sec3} and the fact that the normed Chow ring eliminates all cycles from the ordinary Chow ring which are not Galois invariant (see Section~\ref{sec5}). Note also that $K$ contains $E$, since by the assumptions the group $G$ is of inner type over $K$.
\end{proof}

\begin{example}
Consider a simple quasi-split group $G$ of type $\D_n$, $n\ge 3$, over $F$ and the maximal parabolic subgroup $P_1=P_{\{2,\ldots,n\}}$ of type $\{2,\ldots,n\}$ (the enumeration of simple roots follows Bourbaki). We assume for simplicity that the characteristic of $F$ is different from $2$ (though the formulae for the Poincar\'e polynomials below hold in every characteristic).

Then the group $G$ is of even orthogonal type and the variety $Q=G/P_1$ is a (quasi-split) projective quadric of dimension $2n-2$.

Assume first that the group $G$ is of inner type. This means that the discriminant of the quadric $Q$ is trivial. Then $$P(\CH^*(G/P_1),t)=\frac{(t^{n-1} + 1)(t^n - 1)}{t - 1}=1+t+\ldots+t^{n-2}+2t^{n-1}+t^{n}+\ldots+t^{2n-2}.$$

Let now $G$ be a group of outer type. Let $K$ be the minimal field extension of $F$ such that $G_K$ is a group of inner type, i.e., $K$ is a quadratic extension $K=F(\sqrt{\disc Q})$.
Then $$P(\CH_K^*(G/P_1),t)=\frac{(t^{n-1}-1)(t^n+1)}{t-1}=1+t+\ldots+t^{n-2}+t^n+\ldots+t^{2n-2}.$$
\end{example}

\section{Motives of generically quasi-split varieties}\label{sec7}

We define the $J$-invariant for groups of outer type exactly in the same way as for the inner type, but with Chow rings replaced by the normed Chow rings. Nevertheless, we need to make several adjustments.

First of all, we need to replace $\Ch^*(G)$ in Definition~\ref{def71} by the ring $C^*$ from sequence~\eqref{normedc2}. Moreover, we need to know that, as in the inner case, this ring has presentation~\eqref{pres} and we need to compute the parameters $r$, $d_i$, and $k_i$ for this presentation. Once this is done, the same proof as in the inner case works, and we obtain an analogous motivic decomposition for normed motives as in the inner case (see Theorem~\ref{prop612} below).

Let $Y$ be a generically quasi-split twisted flag variety over $F$. Let $K$ be a Galois field extension of $F$ of $p$-primary degree such that the respective group is a group of inner type over $K$.
We denote by $\overline{\CH}_K^*(Y)$ the subring of rational cycles on $Y$ over $F(Y)$. In order to describe the ring $C^*$ we need to investigate the structure of the ring $\CH_K(\overline{Y})$ as a $\overline{\CH}_K(Y)$-module, where $\overline Y$ stands for $Y_{F(Y)}$. Note that since $Y$ is generically quasi-split, the normed motive of $Y$ over $F(Y)$ is a direct sum of Tate motives.

To begin with, we have the following well-known fact about the Chow group of the upper motive of $Y$.
\begin{lem}
\label{ChU}
$\overline{\CH}_K(\U_K(Y)) = \ff_p \cdot 1$.
\end{lem}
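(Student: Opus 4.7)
The plan is to prove both inclusions of the claimed equality separately.

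The inclusion $\ff_p \cdot 1 \subseteq \overline{\CH}_K(\U_K(Y))$ is immediate. The unit class $1 \in \CH_K^0(Y)$ is $F$-rational, and since $\overline Y$ is irreducible we have $\Ch_K^0(\overline Y) = \ff_p$. By the defining property of the upper motive, $\Ch_K^0(\overline{\U_K(Y)}) \ne 0$, so the pushforward of $1$ generates $\ff_p\cdot 1 \subseteq \overline{\CH}_K^0(\U_K(Y))$.

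For the reverse inclusion I would argue by contradiction. Write $\U_K(Y) = (Y, \pi)$ for a rational primitive projector $\pi$, and suppose there exists a nonzero $\alpha \in \overline{\CH}_K^i(\U_K(Y))$ with $i > 0$. The idea is that $\alpha$ together with a Poincar\'e-dual rational cycle should produce a nontrivial idempotent in $\End(\U_K(Y))$ splitting off a Tate summand $\mathbb{T}(i)$ from $\U_K(Y)$, contradicting indecomposability. Concretely, an $F$-rational lift $\tilde\alpha \in \CH_K^i(Y)$ defines a morphism $\mathbb{T}(i) \to \U_K(Y)$ in the category of normed Chow motives; if a rational cycle $\beta \in \overline{\CH}_K^{\dim Y - i}(\U_K(Y))$ satisfying $\alpha \cdot \beta = [\pt]$ on $\overline{\U_K(Y)}$ can be produced, then for an $F$-rational lift $\tilde\beta$ of $\beta$ the sandwich $\pi \circ (\tilde\alpha \times \tilde\beta) \circ \pi$ is the desired nontrivial idempotent.

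The main obstacle is producing the rational dual cycle $\beta$. This is exactly where the hypothesis $[K:F] = p^n$ enters the argument: it permits the application of Karpenko's theorem on upper motives \cite[Theorem~1.1]{Ka10} in the normed setting, combined with Rost nilpotence adapted to normed Chow motives. Together these supply an $\ff_p$-valued perfect pairing between $F$-rational cycles of complementary codimension on $\U_K(Y)$, which is precisely what is needed to produce $\beta$ and conclude the contradiction.
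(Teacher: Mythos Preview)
Your overall strategy---show that a nonzero rational cycle in positive codimension would force a Tate summand to split off from $\U_K(Y)$, contradicting indecomposability---is the same as the paper's. The paper's proof is a two-sentence version of this: since $Y$ is generically quasi-split, every rational cycle in $\overline{\CH}_K(\U_K(Y))$ is the generic point of some direct summand of $\U_K(Y)$; indecomposability then forces that summand to be $\U_K(Y)$ itself, hence the cycle lies in codimension~$0$.

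Where your argument diverges, and where it has a genuine gap, is in the production of the rational dual cycle $\beta$. You assert that Karpenko's upper-motive theorem together with Rost nilpotence ``supply an $\ff_p$-valued perfect pairing between $F$-rational cycles of complementary codimension on $\U_K(Y)$.'' This is not a standard consequence of either result. Self-duality of the upper motive (which is what \cite{Ka10b} gives) yields a perfect pairing on $\CH_K(\overline{\U_K(Y)})$ over the splitting field, and it relates $\CH_K^i(\U_K(Y))$ with $\CH_K^{d-i}(\U_K(Y))$ over $F$, but neither of these says that the \emph{image of the restriction map}---the subspace of rational cycles---pairs perfectly with itself. A~priori the dual $\beta$ to $\alpha$ exists over $F(Y)$ but need not come from $F$.

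The fix, and the point you are missing, is that you do not need $\beta$ to be $F$-rational at all. Because $Y$ is generically quasi-split, the motive $\overline{\U_K(Y)}$ is a sum of Tate motives over $F(Y)$, so a dual $\beta$ exists there and the correspondence $\alpha\times\beta$ is already an idempotent on $\U_K(Y)_{F(Y)}$. Rost nilpotence then lifts this idempotent to $F$---this is where nilpotence is actually applied, to the projector and not to a pairing. That lifted idempotent cuts out a nontrivial proper summand of $\U_K(Y)$, giving the contradiction. This is precisely the content of the paper's one-line claim that every rational cycle is the generic point of a direct summand.
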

\begin{proof}
Since $Y$ is generically quasi-split, every rational cycle in ${\CH}_K(\U_K(Y))$ is a generic point of some direct summand of $\U_K(Y)$ (the proof is the same as for the inner case; see \cite[Corollary~4.11]{GPS16}).

The lemma now follows, since the motive $\U_K(Y)$ is indecomposable.
\end{proof}

We fix now a projector $\pi \in \End(M_K(Y))$ of the normed motive of $Y$ such that $(Y, \pi) \simeq \U_K(Y)$. Recall that
$\End(Y, \pi) = \pi \circ \End(M_K(Y)) \circ \pi \subset \CH_K^{\dim Y}(Y \times Y) $. Denote by 
$\overline{\End}(Y,\pi)$ the image of the restriction map $\End(Y,\pi) \rightarrow \End(\overline{Y,\pi})$ and write $\overline{\pi}$ for the image of $\pi$ in $\overline{\End}(Y,\pi)$.

For cycles in the normed Chow groups we consider their multiplicities $\mult$ defined exactly in the same way as in the case of ordinary Chow groups (see Section~\ref{sec3}).

\begin{lem}
\label{End}
$\overline{\End}(Y,\pi) = \ff_p \cdot \overline{\pi}$. 
\end{lem}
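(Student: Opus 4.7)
The plan is to combine a multiplicity reduction with Lemma~\ref{ChU} and the indecomposability of $\U_K(Y)$. Since $Y$ is irreducible, pushforward along the first projection gives a multiplicity ring homomorphism $\mult\colon\End(Y,\pi)\to\ff_p$; and because $\pi$ realizes the upper motive (whose rational $0$-codimensional part is $\ff_p\cdot 1$ by Lemma~\ref{ChU}), we have $\mult(\pi)=1$. Consequently, for any $\bar\alpha\in\overline{\CH}_K(\End(Y,\pi))$, setting $c:=\mult(\bar\alpha)\in\ff_p$ reduces the claim to showing that $\bar\alpha=0$ whenever $\mult(\bar\alpha)=0$.

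For this vanishing, I would view $\bar\alpha$ as a rational cycle in $\overline{\CH}_K^{\dim Y}\bigl(\overline{(Y,\pi)}\times\overline{(Y,\pi)}\bigr)$ and consider the induced realization $\bar\alpha_*$ on the rational Chow group of $\U_K(Y)$. By Lemma~\ref{ChU} this group equals $\ff_p\cdot 1$, so the vanishing-multiplicity assumption immediately gives $\bar\alpha_*(1)=0$. To promote this action-level vanishing to an equality of correspondences, I would use that $\End_F(\U_K(Y))$ is a local ring with residue field $\ff_p$ (by Krull-Schmidt applied to the indecomposable $\U_K(Y)$, in the spirit of Karpenko's upper motive theorem). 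Elements of multiplicity zero lie in the maximal ideal and are thus nilpotent; a hypothetical nonzero rational nilpotent $\bar\alpha$ could, by a standard polynomial idempotent construction, be lifted to a nontrivial projector in $\End_F(\U_K(Y))$, which would split off a proper direct summand of $\U_K(Y)$ and contradict indecomposability. Hence $\bar\alpha=0$.

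The main obstacle is precisely this last step: passing from a nonzero rational endomorphism with trivial multiplicity to a nontrivial idempotent in order to contradict indecomposability. Rationality is essential here, since it is what guarantees that the resulting idempotent (and the corresponding motivic splitting) is defined over $F$, rather than merely over the generic fibre. Once this is in place, the lemma follows by combining Lemma~\ref{ChU} with the multiplicity bookkeeping described above.
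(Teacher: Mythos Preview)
Your reduction to the case $\mult(\bar\alpha)=0$ is correct and matches the paper. The nilpotency of $\alpha$ in $\End_F(\U_K(Y))$ also follows, as you say, from locality of this ring. But the final step fails, and the idempotent idea is backwards: a nilpotent element never generates a nontrivial idempotent --- any polynomial without constant term in a nilpotent is again nilpotent. Idempotent lifting goes the other direction (from an idempotent modulo a nil ideal to a genuine idempotent), so there is nothing here to contradict indecomposability. More fundamentally, nilpotency over $F$ does not force $\bar\alpha=0$ after restriction: over the quasi-splitting field the motive $\overline{(Y,\pi)}$ is a sum of Tate motives and its endomorphism ring is a product of matrix algebras $\prod_n M_{m_n}(\ff_p)$, which has many nonzero nilpotents. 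So the gap you flag is real, and your proposed bridge does not cross it.

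The paper proceeds by a direct cycle-theoretic argument rather than through the structure of $\End_F(\U_K(Y))$. Write $\bar\alpha=\sum_i a_i\times b_i$ with the $b_i$ running over a fixed basis of $\CH_K(\overline Y)$, and pick $a_l$ of minimal codimension (positive, since $\mult\alpha=0$). Choose $\beta\in\CH_K(Y\times Y)$ lifting the dual basis element $b_l^{*}$ under the generic-point pullback $\CH_K(Y\times Y)\twoheadrightarrow\CH_K(Y_{F(Y)})$; then $\overline\beta=1\times b_l^{*}+(\text{terms with first factor of positive codimension})$, and a direct computation gives $(\pr_1)_*(\overline{\alpha\beta})=a_l$. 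Since $\alpha\beta$ is defined over $F$, this exhibits $a_l$ as an $F$-rational element of $\CH_K(\overline{Y,\pi})$ of positive codimension, which vanishes by Lemma~\ref{ChU}. Iterating kills all terms of $\bar\alpha$. The key move you are missing is this spreading-out trick producing the rational preimage $\beta$, which converts the problem into a rationality statement about a single cycle on $\overline Y$ where Lemma~\ref{ChU} applies directly.
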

\begin{proof}
Note that, since $(Y, \pi) \simeq \U_K(Y)$, we have  $\mult \overline{\pi}=1$. Let $\overline{\alpha} \in \overline{\End}(Y,\pi)$ and let $\alpha$ be its preimage in $\End(Y,\pi)$. Replacing if necessary $\alpha$ by $\alpha - \lambda \cdot \pi$ with $\lambda \in \ff_p$, we can assume that $\mult \alpha =0$ and our goal is to show that $\alpha = 0$.

We fix a homogeneous $\ff_p$-basis $\mathcal{B}$ of $\CH_K(\overline{Y})$ with its dual basis $\mathcal{B}^*$ and  write $${\overline{\alpha} = \Sum_{i\in I}a_i\times b_i}$$ for some index set $I$, where $b_i \in \mathcal{B}$. We choose $a_l$, $l \in I$,  with the minimal codimension among all elements $a_i, i \in I$. Note that, since  $\mult \alpha =0$, we have ${\codim a_l > 0}$. Consider the cycle $b_l^* \in \mathcal{B}^*$ dual to $b_l$. That is, for every $i\in I$ the degree $\deg(b_ib_l^*)$ is equal to $1$ if $i=l$ and is equal to $0$ otherwise.   

Let $\beta \in \CH_K( Y \times Y)$  be a preimage of $b_l^*$ under the flat pull-back
$$\CH_K(Y \times Y) \, \, -\!
\!\! \twoheadrightarrow \CH_K(Y_{F(Y)})$$
\noindent with respect to the morphism 
$ Y_{F(Y)}=\Spec F(Y) \times Y \rightarrow Y\times Y$ given by the generic point of $Y$.

Since $\overline{\beta}= 1 \times b_l^* + \dots $, where $`` \dots "$ stands for a linear combination of only those terms whose first factor has codimension $> 0$, we have
$$\overline{(\pr_1)_*(\alpha \beta)} =  (\pr_1)_*(\overline{\alpha\beta}) = (\pr_1)_* (a_l \times [\pt] +  \dots) =a_l \, ,$$
\noindent where $[\pt]$ denotes the class of a rational point and $`` \dots "$ stands for a linear combination of only those terms whose second factor has dimension $> 0$. It follows that $a_l$ is $F$-rational. Since $a_l \in \CH_K(\overline{Y, \pi})$ and $\codim a_l >0$, by Lemma~\ref{ChU} we have $a_l=0$ and, therefore, $\alpha =0$.
\end{proof}

\begin{cor}
\label{mult}
Let $\overline{\alpha} \in \overline{\End}(Y,\pi)$. Then 
$$\mult \overline{\alpha} = 1 \, \, \Longrightarrow   \, \,  \overline{\alpha} = \overline{\pi} \quad \text{and} \quad \mult \overline{\alpha} = 0 \, \, \Longrightarrow  \, \,\overline{\alpha} = 0 \, . $$
\end{cor}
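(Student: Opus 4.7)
The plan is to derive this corollary as an immediate consequence of Lemma~\ref{End}. By that lemma, $\overline{\CH}_K(\End(Y,\pi)) = \ff_p \cdot \overline{\pi}$, so any element $\overline{\alpha}$ of this group can be written uniquely as $\overline{\alpha} = \lambda \cdot \overline{\pi}$ for some scalar $\lambda \in \ff_p$.

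First I would observe that the multiplicity map is $\ff_p$-linear in the correspondence. Applying $\mult$ to the relation $\overline{\alpha} = \lambda \cdot \overline{\pi}$ yields $\mult \overline{\alpha} = \lambda \cdot \mult \overline{\pi}$. Since $(Y,\pi) \simeq \U_K(Y)$ is the upper motive, the projector $\pi$ satisfies $\mult \overline{\pi} = 1$ (this was already recorded at the start of the proof of Lemma~\ref{End}), hence $\lambda = \mult \overline{\alpha}$.

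Plugging in the two hypotheses of the corollary finishes the proof: if $\mult \overline{\alpha} = 1$ then $\lambda = 1$ and $\overline{\alpha} = \overline{\pi}$; if $\mult \overline{\alpha} = 0$ then $\lambda = 0$ and $\overline{\alpha} = 0$. There is no real obstacle here — the statement is essentially a bookkeeping consequence of Lemma~\ref{End} together with the normalization $\mult \overline{\pi} = 1$ of the upper-motive projector, and no further geometric or combinatorial input is required.
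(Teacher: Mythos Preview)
Your proof is correct and matches the paper's approach: the paper gives no explicit proof of this corollary, leaving it as an immediate consequence of Lemma~\ref{End} together with the fact $\mult\overline{\pi}=1$ recorded there, which is precisely what you have written out.
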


\begin{lem}
\label{R-basis}
The ring $\CH_K(\overline{Y})$ is a free $\overline{\CH}_K(Y)$-module. Moreover, for a basis of this module one can take any $\ff_p$-basis of $\CH_K(\overline{\U_K(Y)})$. 
\end{lem}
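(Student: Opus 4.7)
The plan is to combine Poincar\'e duality on $\overline{Y}$ with the rigidity of the upper motive (Corollary~\ref{mult}) to inductively construct a system of mutually orthogonal rational projectors, from which the freeness assertion follows.

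I first fix a homogeneous $\ff_p$-basis $b_1=1,b_2,\dots,b_m$ of $\CH_K(\overline{\U_K(Y)})$, ordered so that $\codim b_i$ is non-decreasing; that $1$ may be taken as a basis element is guaranteed by Lemma~\ref{ChU}. Poincar\'e duality on the smooth projective variety $\overline{Y}$ provides dual cycles $b_i^\vee\in\CH_K(\overline{Y})$ with $\deg(b_i^\vee\cdot b_j)=\delta_{ij}$. Since $(Y,\pi)\simeq\U_K(Y)$ and $\overline{\pi}_*$ acts on $\CH_K(\overline{Y})$ as the idempotent projection onto its direct summand $\CH_K(\overline{\U_K(Y)})$, the K\"unneth expansion of $\overline{\pi}$ necessarily takes the form
\[
\overline{\pi} \;=\; \sum_{i=1}^m b_i^\vee\times b_i \, .
\]

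Next, I lift inductively on $\codim b_i$ each term $b_i^\vee\times b_i$ to a rational correspondence $\pi_i\in\CH_K(Y\times Y)$, possibly after adjusting by correspondences of strictly smaller codimension in the second factor. The key rigidity input is that the obstruction $\pi_i\circ\pi_j-\delta_{ij}\pi_i$, once sandwiched appropriately with $\pi$, gives a rational element of $\overline{\CH}_K(\End(Y,\pi))$ of multiplicity zero; Corollary~\ref{mult} then forces the obstruction to vanish, yielding the orthogonality relations $\pi_i\circ\pi_j=\delta_{ij}\pi_i$ together with $\sum_i\pi_i=\id_{M_K(Y)}$.

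The resulting motivic decomposition $M_K(Y)\simeq\bigoplus_{i=1}^m\U_K(Y)(\codim b_i)$ transfers on Chow groups to ${\CH_K(\overline{Y})=\bigoplus_i\rho_i\cdot\CH_K(\overline{\U_K(Y)})}$, where each $\rho_i\in\overline{\CH}_K(Y)$ is a rational cycle of codimension $\codim b_i$ read off from $\pi_i$, with $\rho_1=1$. The products $\rho_i\cdot b_j$ then form an $\ff_p$-basis of $\CH_K(\overline{Y})$, and reorganizing the sum shows that $\{b_1,\dots,b_m\}$ is a free basis over $\overline{\CH}_K(Y)$, as desired.

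The main obstacle is the inductive construction of the rational projectors $\pi_i$: since the Poincar\'e duals $b_i^\vee$ are not themselves $F$-rational, one must produce rational lifts by a careful recursion, using the rationality of $\pi$ together with Poincar\'e duality on $\overline{Y\times Y}$ and correcting by terms of lower codimension at each stage --- and it is precisely at the correction step that Corollary~\ref{mult} is indispensable.
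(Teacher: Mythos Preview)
Your proposal contains a genuine structural error: you have swapped the roles of $\overline{\CH}_K(Y)$ and $\CH_K(\overline{\U_K(Y)})$ in the motivic decomposition. You index your projectors $\pi_1,\dots,\pi_m$ by an $\ff_p$-basis of $\CH_K(\overline{\U_K(Y)})$ and then claim $M_K(Y)\simeq\bigoplus_{i=1}^m\U_K(Y)(\codim b_i)$. But an elementary rank count shows this is impossible in general: if $r=\dim_{\ff_p}\overline{\CH}_K(Y)$ and $m=\dim_{\ff_p}\CH_K(\overline{\U_K(Y)})$, then $\dim_{\ff_p}\CH_K(\overline{Y})=rm$ (this is exactly what the lemma asserts), so the number of copies of $\U_K(Y)$ in any such decomposition is $r$, not $m$. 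Your final step suffers from the same confusion: the products $\rho_i\cdot b_j$ would give an $m^2$-dimensional space, and you never check that the $\rho_i$ span $\overline{\CH}_K(Y)$, which is what ``reorganizing the sum'' would actually require.

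The inductive lifting step is also not a proof as written. The individual K\"unneth terms $b_i^\vee\times b_i$ of $\overline{\pi}$ have no reason to admit rational lifts, and Corollary~\ref{mult} only controls rational endomorphisms of $(Y,\pi)$ --- it says nothing about producing rational preimages of prescribed cycles on $\overline{Y}\times\overline{Y}$. Even granting some lifts, note that $\sum_i b_i^\vee\times b_i=\overline{\pi}$, not $\Delta_{\overline{Y}}$, so your claim $\sum_i\pi_i=\id_{M_K(Y)}$ cannot follow from lifting these terms ``up to lower-order corrections.''

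The paper's argument runs in the opposite direction and avoids these issues entirely. It starts from a homogeneous $\ff_p$-basis $b_1,\dots,b_r$ of the \emph{rational} cycles $\overline{\CH}_K(Y)$; these have rational lifts $x_i\in\CH_K(Y)$ by definition. From each $x_i$ one builds a rational correspondence $\beta_i=(x_i\times 1)\cdot\pi$, pairs it with a rational $\alpha_i$ obtained by pulling back a Poincar\'e dual $b_i^*$ from the generic fibre, and then sets $\pi_i=\alpha_i\circ\pi\circ\beta_i$. Corollary~\ref{mult} is used precisely to show $\overline{\pi\circ\beta_j\circ\alpha_i\circ\pi}=\delta_{ij}\,\overline{\pi}$, which gives orthogonality of the $\overline{\pi}_i$; a further rationality argument (as in Lemma~\ref{End}) forces $\sum_i\overline{\pi}_i=\Delta_{\overline{Y}}$. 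One then checks directly that $\CH_K(\overline{Y},\overline{\pi}_i)=b_i\cdot\CH_K(\overline{Y,\pi})$, giving $\CH_K(\overline{Y})=\bigoplus_{i=1}^r b_i\cdot\CH_K(\overline{\U_K(Y)})$. The freeness over $\overline{\CH}_K(Y)$ with basis any $\ff_p$-basis of $\CH_K(\overline{\U_K(Y)})$ follows immediately.
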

\begin{proof}
Let $b_1, \ldots, b_r$ be a homogeneous $\ff_p$-basis of $\overline{\CH}_K(Y)$  and let ${b^*_1,\ldots, b^*_r \in \CH_K(\overline{Y})}$ be a family of cycles dual to this basis with respect to the bilinear form
\begin{align*}
\CH_K(\overline{Y})\times \CH_K(\overline{Y}) &\rightarrow \ff_p\\
(x,y) &\mapsto \deg(x\cdot y)
\end{align*}

Let $\pi \in \End(M_K(Y))$ be a projector as above, i.e., $(Y, \pi) \simeq \U_K(Y)$. We claim that $\CH_K(\overline{Y}) = \Oplus_{i=1}^r b_i\CH_K(\overline{Y,\pi})$. In order to show this, we will construct $F$-rational pairwise orthogonal projectors $\overline{\pi_i}$, $i=1,\dots,r$, as follows.

Let $i \in [1,r]$ and let $x_i \in \CH_K(Y)$ be a preimage of $b_i \in \CH_K(\overline{Y})$. We construct two correspondences $\alpha_i, \beta_i \in \CH_K(Y\times Y)$ as follows. We set $\beta_i = (x_i \times 1)\cdot \pi$ and define $\alpha_i$ as a cycle whose image under the surjection
$\CH_K(Y \times Y) \, \, -\!
\!\! \twoheadrightarrow \CH_K(\Spec F(Y) \times Y) \simeq \CH_K(\overline{Y}) $
is $b_i^*$. Note that by construction of $\alpha_i$ and $\beta_i$ we have $\mult (\beta_i\circ \alpha_i) =1$ and, hence, $\mult (\pi \circ \beta_i\circ \alpha_i \circ \pi) =1$.
By Corollary~\ref{mult} we obtain $ \overline{\pi \circ \beta_i\circ \alpha_i \circ \pi} = \overline{\pi}$. It follows that the correspondence $\overline{\alpha_i \circ \pi \circ  \beta_i} \in \CH_K(\overline{Y}\times \overline{Y})$ is a projector, which we denote by $\overline{\pi}_i$. Moreover, $\overline\pi_i$ are rational, since so are the cycles $\overline\alpha_i$, $\overline\pi$ and $\overline\beta_i$.
By construction of the correspondences $\alpha_i$ and $\beta_j$, we have $\mult (\beta_j\circ \alpha_i) =0$ if $i \neq j$. Thus, it follows from Corollary~\ref{mult} that the projectors $\overline{\pi}_1, \dots , \overline{\pi}_r$ are pairwise orthogonal.

Moreover, we have $\overline{\pi}_1 + \dots + \overline{\pi}_r = \Delta_{\overline{Y}}$, where $\Delta_{\overline{Y}}$ is the diagonal class in 
 $\CH_K(\overline{Y}\times \overline{Y} )$. Indeed, if the difference 
  $\Delta_{\overline{Y}} - \overline{\pi}_1 - \dots - \overline{\pi}_r $
is non-zero, then we can use the same argument as in the proof of Lemma~\ref{End} to produce an $F$-rational cycle in $\CH_K(\overline{Y})$ which is not a linear combination of $b_i$, $i=1, \ldots, r$.

The above decomposition of the diagonal class $\Delta_{\overline{Y}}$ implies the
decomposition $${\CH_K(\overline{Y}) = \Oplus_{i=1}^r \CH_K(\overline{Y},\overline{\pi}_i)}.$$ We claim that $\CH_K(\overline{Y},\overline{\pi}_i) =   b_i \CH_K(\overline{Y,\pi})$. Since ${\overline{\pi}^*_i = \overline{\beta}_i^* \circ \overline{\pi}^* \circ \overline{\alpha}_i^*}$, where the upper star denotes the co-realization (see Section~\ref{sec3}), we have
$$\CH_K(\overline{Y},\overline{\pi}_i) =\overline{\pi}^*_i \CH_K(\overline{Y}) \subset \overline{\beta}_i^* \CH_K(\overline{Y}) \subset b_i \CH_K(\overline{Y,\pi}) \, ,$$
where the second inclusion follows from the equality $\overline{\beta}_i = (b_i\times 1)\overline{\pi}$. Moreover, we have an isomorphism of split motives
$(\overline{Y},\overline{\pi}_i) \simeq \overline{(Y,\pi)}(k)$ with $k = \codim b_i$. It follows that
$\dim_{\ff_p} \CH_K(\overline{Y},\overline{\pi}_i) =\dim_{\ff_p} \CH_K(\overline{Y,\pi})$. Therefore, the inclusion $${\CH_K(\overline{Y},\overline{\pi}_i) \subset b_i \CH_K(\overline{Y,\pi})}$$ is an equality and we have $\CH_K(\overline{Y}) = \Oplus_{i=1}^r b_i\CH_K(\overline{Y,\pi})$.
\end{proof}

\begin{rem}
The same proof shows that Lemma~\ref{R-basis} holds also for groups of inner type. In particular, it provides a geometric proof of \cite[page~71, (2)]{Kac85} (cf. also \cite[Lemma~5.5]{PS21}).
\end{rem}

\begin{rem}\label{rem75}
Note that the projectors $\overline\pi_i$ in the proof of the above lemma are $F$-rational. Therefore, the decomposition of the diagonal class  $\Delta_{\overline{Y}} = \overline{\pi}_1 + \dots + \overline{\pi}_r$ provides the motivic decomposition  $M_K(Y) = \Oplus_{i=1}^r \U_K(Y)(\codim b_i)$.
\end{rem}

\begin{prop}
\label{polynom}
Let $G$ be a semisimple algebraic group over $F$ and let $K$ be a Galois field extension such that $G_K$ is of inner type.
Let $Y$ be a generically quasi-split $G$-variety. Let $U$ be the normed upper motive of $Y$.
Then the following polynomial identity holds
$$P(\CH^*_K(Y),t) = P(\CH^*_K(U),t) \cdot P(\overline{\CH}_K^*(Y), t) \, .$$
\end{prop}

\begin{proof}
Since $Y$ is generically quasi-split, the normed motive of $Y$ decomposes into a direct sum of the motive $U$ with some Tate twists
$$M_K(Y) = \bigoplus_{i\geq 0} \, \, U(i)^{\oplus a_i} \, , $$
\noindent where $a_i \geq 0$ is the number of the summands $U(i)$ in the above decomposition (see Remark~\ref{rem75}).
Taking the Poincar\'e polynomials of both sides of the above motivic decomposition we obtain the required polynomial identity.
\end{proof}

\begin{prop}\label{prop77}
The ring $C^*$ does not depend on the choice of a parabolic subgroup $P$ of a quasi-split semisimple algebraic group $G$ such that the variety $E/P$ is generically quasi-split for a generic $G$-torsor $E$.
\end{prop}
\begin{proof}
We will show a more general statement.

Let $X$ and $Y$ be two generically quasi-split twisted flag varieties over $F$ (we do not assume that they are homogeneous under the same group) such that their normed upper motives $\U_K(X)$ and $\U_K(Y)$ are isomorphic.

We consider the subrings $\overline{\CH}^*_K(X)$ and $\overline{\CH}^*_K(Y)$ of rational cycles in $\CH^*_K(\overline X)$ and $\CH^*_K(\overline Y)$ and the ideals $I_X$ and $I_Y$ generated by the elements of these subrings of positive codimensions.

The ring $C^*$ is just the ring $\CH^*_K(\overline X)/I_X$ resp. $\CH^*_K(\overline Y)/I_Y$ in the case when the twisted flag varieties $X$ and $Y$ are generic. Note that we do not use this in the proof. 

It is sufficient to prove that the respective factor-rings are isomorphic for the variety $X$ and for the product $X\times Y$ instead of $Y$. We have a natural projection $\pr\colon X\times Y\to X$ and its pullback $\pr^*$ induces a commutative diagram of rings:
$$\xymatrix{
\overline{\CH}^*_K(X\times Y) \ar[r] & \CH^*_K(\overline X\times \overline Y)  \ar[r]
&  \CH^*_K(\overline X\times \overline Y)/I_{X\times Y} \\
\overline{\CH}^*_K(X) \ar[r] \ar[u]& \CH^*_K(\overline X) \ar[r] \ar[u]& \CH^*_K(\overline X)/I_{X} \ar[u]}$$

According to \cite[Corollary~2.15, Proof of Lemma~2.14]{Ka10} we can choose the projectors $\pi \in \End(M_K(X))$ and $\pi' \in  \End(M_K(X\times Y))$ representing respectively the upper motives $\U_K(X)$ and $\U_K(X\times Y)$ in such a way that the pullback $\pr^*\colon \CH^*_K(X) \rightarrow \CH^*_K(X\times Y)$ induces an isomorphism $\CH^*_K(\overline{X}, \overline{\pi}) \simeq   \CH^*_K(\overline{X}\times \overline{Y}, \overline{\pi}')$. Then we have the following commutative diagram of $\ff_p$-vector spaces:

$$\xymatrix{
\CH^*_K(\overline{X}\times \overline{Y}, \overline{\pi}') \ar[r] & \CH^*_K(\overline{X}\times \overline{Y})  \ar[r]
&  \CH^*_K(\overline{X}\times \overline{Y})/I_{X\times Y} \\
\CH^*_K(\overline{X}, \overline{\pi}) \ar[r] \ar[u]& \CH^*_K(\overline{X}) \ar[r] \ar[u]& \CH^*_K(\overline{X})/I_{X} \ar[u]} $$

By Lemma~\ref{R-basis} the compositions in the first and second rows of the above diagram are isomorphisms. Hence,  $\CH^*_K(\overline X)/I_X$ and $\CH^*_K(\overline X\times \overline Y)/I_{X \times Y}$ are isomorphic as rings.
\end{proof}

\section{$J$-invariant for groups of outer type}\label{sec8}

{\bf Exceptional types.}

We consider exceptional simple groups first, i.e., groups of type $^2\E_6$ ($p=2$) and $^3\D_4$ ($p=3$). In these cases one can compute $C^*$ directly using a combinatorial definition of the homomorphism $c$.

Namely, we have $C^*=\ff_2[e_3,e_5,e_9]/(e_3^2,e_5^2,e_9^2)$ for type $^2\E_6$ and $C^*=\ff_3[e_4]/(e_4^3)$ for type $^3\D_4$. This information is sufficient to compute a complete motivic decomposition of the variety of Borel subgroups $E/B$ for these two Dynkin types.

On the other hand, there is an alternative approach to compute the motive of $E/B$ for types $^2\E_6$ and $^3\D_4$ which we present now.

If $G$ an absolutely simple simply-connected algebraic group over $F$, then there is the Rost invariant $R_G\colon H^1(-,G)\to H^3(-,\qq/\Z(2))$ (see \cite[\S31B]{KMRT}, \cite{GMS03}).

Furthermore, with an embedding of absolutely simple simply-connected algebraic groups $H\subset G$ one can associate a positive integer $n$ called the Rost multiplier (see \cite[Section~2.1]{Ga01}) such that the following diagram commutes

$$\xymatrix{
H^1(F,H) \ar[r]^-{R_H}\ar[d] & H^3(F,\qq/\Z(2)) \ar[d]^-{n\cdot} \\
           H^1(F,G)\ar[r]^-{R_G}  & H^3(F,\qq/\Z(2)).
           }$$
Note that the Rost multiplier is a combinatorial invariant and can be computed under the assumption that the base field is algebraically closed.

We will be interested in the embeddings of quasi-split simply-connected algebraic groups ${{}^3\D_4\to\F_4}$ (see \cite[Corollary~38.7]{KMRT}) and ${{}^2\E_6\to\E_7}$ (see \cite[Proposition~3.6]{Ga01}). Both these embeddings have Rost multiplier $1$. For an algebraic group $G$ over $F$ we identify $H^1(F,G)$ with the set of $G$-torsors over $F$.

For a ${}^3\D_4$-torsor $E\in H^1(F,{}^3\D_4)$ (resp. for an ${}^2\E_6$-torsor $E\in H^1(F,{}^2\E_6)$) we consider its image ${\widetilde E\in H^1(F,\F_4)}$ (resp. ${\widetilde E\in H^1(F,\E_7)}$).

We denote by $B$ the Borel subgroup of the group ${}^3\D_4$ (resp. of ${}^2\E_6)$) and by $\widetilde B$ the Borel subgroup of $\F_4$ (resp. of $\E_7$).

\begin{prop}\label{prop65}
 In the above notation the upper motives of $E/B$ and of $\widetilde E/\widetilde B$ in the category of ordinary Chow motives modulo a prime number $p$ are isomorphic.
\end{prop}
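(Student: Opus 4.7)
The plan is to apply Karpenko's criterion for upper motives \cite[Theorem~1.1]{Ka10}: for twisted flag varieties $X$ and $Y$ under semisimple algebraic groups, the upper $\ff_p$-motives of $X$ and $Y$ are isomorphic if and only if each of $X_{F(Y)}$ and $Y_{F(X)}$ carries a zero-cycle of degree prime to $p$. I would verify both halves of this criterion by exploiting the embedding $G \hookrightarrow \widetilde G$ in combination with the Rost invariant and the assumption that the Rost multiplier equals~$1$.

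For the easy direction, I would argue by functoriality. Over the field $L := F(E/B)$ the variety $E/B$ acquires a rational point, so the twisted form ${}_EG$ becomes quasi-split of the same Dynkin type and $*$-action as $G$, hence is isomorphic to $G_L$; equivalently, $E$ becomes trivial in $H^1(L,G)$. Functoriality of $H^1$ along the embedding of groups then forces $\widetilde E_L$ to be trivial, so $\widetilde E/\widetilde B$ has an $L$-rational point, which in particular provides a zero-cycle of degree prime to $p$.

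For the harder direction, set $K := F(\widetilde E/\widetilde B)$. Since $\widetilde G$ is of inner type ($\F_4$ resp.\ $\E_7$), the torsor $\widetilde E_K$ is trivial, whence $R_{\widetilde G}(\widetilde E)$ vanishes over $K$. The commutative square characterizing the Rost multiplier, together with the assumption that the multiplier equals $1$, then yields $R_G(E)_K = 0$. At this point I would invoke the known fact that for $G$ quasi-split simply connected of type ${}^3\D_4$ at $p=3$ and of type ${}^2\E_6$ at $p=2$ the $p$-primary part of the kernel of the Rost invariant is trivial. This implies that $E_K$ becomes trivial after a prime-to-$p$ extension of $K$, producing a zero-cycle of degree prime to $p$ on $(E/B)_K$. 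Combined with the easy direction, \cite[Theorem~1.1]{Ka10} gives the desired isomorphism of upper motives modulo~$p$.

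The main obstacle I expect is supplying the precise injectivity statement for the Rost invariant in the two cases, i.e., verifying that the $p$-primary kernel of $R_G$ vanishes for quasi-split simply connected ${}^3\D_4$ at $p=3$ and for ${}^2\E_6$ at $p=2$. Once this input is available, the rest of the argument — functoriality of $H^1$, the Rost multiplier formula, and Karpenko's criterion — reduces to routine bookkeeping.
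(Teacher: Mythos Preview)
Your proposal is correct and follows essentially the same route as the paper: reduce to Karpenko's zero-cycle criterion and then use that the Rost multiplier of the embedding is $1$ together with the triviality of the kernel of the Rost invariant. The paper streamlines your two directions into one symmetric step by invoking \cite[Theorem~0.5]{Ga01} and \cite[Theorem~6.1]{Ch03}, which give that the kernel of the Rost invariant is \emph{entirely} trivial (not merely $p$-primarily) for all four types ${}^3\D_4$, $\F_4$, ${}^2\E_6$, $\E_7$; hence $E$ and $\widetilde E$ have equal Rost invariants and become quasi-split over exactly the same fields, so your anticipated obstacle is already handled by these references and no prime-to-$p$ extension is needed.
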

\begin{proof}
It suffices to prove that the twisted flag varieties $E/B$ and $\widetilde E/\widetilde B$ have zero-cycles of degree coprime to $p$ over the function fields of each other.

By \cite[Theorem~0.5]{Ga01} and \cite[Theorem~6.1]{Ch03} the kernel of the Rost invariant is trivial for groups of type $^3\D_4$, $\F_4$, $^2\E_6$, and $\E_7$. Since the Rost multiplier for embeddings of the respected pairs of groups ${{}^3\D_4\to\F_4}$  and ${{}^2\E_6\to\E_7}$ equals $1$, it follows that the Rost invariants of $E$ and $\widetilde E$ are equal and, therefore, the varieties $E/B$ and $\widetilde E/\widetilde B$ are quasi-split over the same fields.
\end{proof}

In particular, Proposition~\ref{prop65} immediately implies that in the above notation the $J$-invariants of $E$ and of $\widetilde E$ coincide and this gives the respective two lines for $^3\D_4$ and $^2\E_6$ in Table~\ref{tab5} below. Note that Proposition~\ref{prop65} holds for simply-connected groups, but we can put adjoint groups in Table~\ref{tab5}, since the index of the weight lattice in the root lattice is coprime to $p$ (with $p=3$ for $^3\D_4$ and $p=2$ for $^2\E_6$).\\

Next we consider the remaining two cases of absolutely simple groups of outer type, namely types $^2\A_n$ and $^2\D_n$. The arguments in these two cases are slightly different and, therefore, we study these cases separately.

{\bf Unitary groups.}

We start with the unitary case $^2\A_n$.
Groups of type $^2\A_n$ correspond to central simple algebras $B$ over a quadratic field extension of $F$ of degree $n+1$ with an involution of the second kind $\tau$. Firstly, we consider the case when the algebra $B$ is split. If $n$ is even, this is automatically the case, since in the unitary case we are interested in motives modulo~$2$.

For a split algebra $B$ the $J$-invariant was computed by Fino in \cite{Fi19} and we reproduce his formulae in Table~\ref{tab5} below. This finishes the case $^2\A_{2n}$ completely, but the case $^2\A_{2n+1}$ with a non-split algebra $B$ requires an additional consideration.

Let $X$ be the maximal unitary Grassmannian (i.e., the variety of parabolic subgroups of type $\{1,2,\ldots,2n+1\}\setminus\{n+1\}$). Its normed Chow ring was computed by Fino in \cite[Proposition~4.15]{Fi19}. Namely, one has $\CH^*_K(\overline X)=\ff_2[e_1,e_3,\ldots,e_{2n+1}]/(e_1^2,e_3^2,\ldots,e_{2n+1}^2)$. Moreover, it follows from \cite[Theorem~8.1]{Ka12c} that in the generic case the normed Chow motive of $X$ is indecomposable.

To compute the respective ring $C^*$ we consider a generic generically quasi-split variety homogeneous under the group ${\Aut_K(B,\tau)\times R_{K/F}(\PGL_1(B))}$,
which is the product of the maximal unitary Grassmannian and the Weil restriction $R_{K/F}(\SB(B))$ (in particular, the central simple algebra $B$ is generic).

The respective quasi-split group $G^{qs}$ from sequence~\eqref{normedc2} is a central product of a group of type ${^2\A_{2n+1}}$ and a group of type $^2(2\,{{}^1\A_{2n+1}})$.
Since the normed Chow motive of a generic maximal unitary Grassmannian is indecomposable, by Proposition~\ref{polynom} its normed Chow group over a splitting field does not contain rational cycles of positive codimension.
Note also that $\CH^*_K(R_{K/F}(\PP^{2n+1}))=\ff_2[R(h)]/(R(h)^{2n+2})$.
The proof of Proposition~\ref{prounit} below shows
that the subring of rational cycles on $X\times R_{K/F}(\SB(B))$ in the generic case is generated by
the cycle $1\times R(h)^j$ with $j=2^k$ such that $2^k\parallel\frac{2n+2}{2}$ and $\codim R(h)=2$.

Therefore, the ring $C^*$ equals $\ff_2[e_1,e_3,\ldots,e_{2n+1}]/(e_1^2,e_3^2,\ldots,e_{2n+1}^2)\otimes \ff_2[e]/(e^{2^k})$ with $\codim e=2$ and $k$ such that $2^k\parallel\frac{2n+2}{2}$. 

By $v_p$ we denote the $p$-adic valuation, and for a generically quasi-split twisted flag variety $X$ we denote by $\overline{\CH}_K^*(X)$ the subring of rational cycles on $X$ over $F(X)$.

\begin{prop}\label{prounit}
The (normed) $J$-invariant of a central simple algebra $(B,\tau)$ with a unitary involution and such that $\deg B$ is even, is the concatenation of the $J$-invariant of the Hermitian form associated with $\tau_{F(R_{K/F}(\SB(B)))}$ (computed by Fino) and $$j_1=\begin{cases}
v_2(\ind B), & \text{if }\ind B \text{ divides }\frac{\deg B}{2};\\
v_2(\ind B)-1, & \text{otherwise}.
\end{cases}$$
For respective parameters $d_1$ and $k_1$ we have $d_1=2$ and $2^{k_1}\parallel \frac{\deg B}{2}$.
\end{prop}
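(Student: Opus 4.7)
The plan is to exploit the tensor-product form of $C^*$ established just before the statement, so that the normed $J$-invariant splits as a concatenation of entries coming from the two tensor factors: the maximal unitary Grassmannian $X$ and the Weil-restricted Severi-Brauer variety $R_{K/F}(\SB(B))$. For the first factor I would use a function-field reduction to the Hermitian form case handled by Fino, while for the new $\codim$-$2$ generator $e$ I would analyse the rationality of $e^{2^j}$ via Karpenko's Weil-restriction map together with the classical rationality result for powers of the hyperplane class on a Severi-Brauer variety.

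For the first tensor factor $\ff_2[e_1,e_3,\ldots,e_{2n+1}]/(e_1^2,\ldots,e_{2n+1}^2)$: any rational cycle on $X$ that contributes to $\overline{\CH}_K^*(Y)$ for the product variety $Y = X \times R_{K/F}(\SB(B))$ must come, by the standard generic-fiber argument, from a cycle rational over the function field $F(R_{K/F}(\SB(B)))$. Over this field the Brauer class of $B$ dies, so $(B,\tau)$ reduces to a Hermitian form $h_\tau$ of dimension $\deg B$ and $X$ becomes the maximal orthogonal Grassmannian of $h_\tau$; the resulting entries of the $J$-invariant therefore agree with Fino's computation for $h_\tau$.

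For the new entry $j_1$ in the second factor $\ff_2[e]/(e^{2^{k_1}})$ with $\codim e = 2$ and $2^{k_1}\parallel \frac{\deg B}{2}$: I would search for the smallest $j \geq 0$ such that $e^{2^j}$ is rational. Since this factor has a single generator, no lower-order monomial of the same codimension exists, so this is a pure rationality question. The central tool is Karpenko's map $R\colon \CH^*(\SB(B)) \to \CH^{2*}(R_{K/F}(\SB(B)))$ from \cite{Ka00}, which doubles codimensions, preserves rationality, and sends the hyperplane class $h$ on $\SB(B)$ to $e$. Combined with the classical fact that $h^m$ is $K$-rational modulo $2$ on $\SB(B)$ if and only if $2^{v_2(\ind B)} \mid m$, this yields $j_1 \leq v_2(\ind B)$ whenever $v_2(\ind B) \leq k_1$. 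For the reverse inequality, one observes that over a splitting field $e$ restricts to $h_1 + h_2$ on $R_{K/F}(\SB(B))_L = \PP^{N-1} \times \PP^{N-1}$, so $e^{2^j}$ restricts to $h_1^{2^j} + h_2^{2^j}$ modulo $2$ by Frobenius; the Weil-restriction identities \eqref{weil1}--\eqref{weil3} then force rationality of $e^{2^j}$ over $F$ to imply rationality of $h^{2^j}$ over $K$ on $\SB(B)$, hence $2^j \geq 2^{v_2(\ind B)}$.

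Combining the two inequalities gives $j_1 = v_2(\ind B)$ when $v_2(\ind B) \leq k_1$, a condition equivalent to $\ind B \mid \frac{\deg B}{2}$ (since $\ind B \mid \deg B$ always and the odd parts divide automatically). Otherwise $\ind B \mid \deg B$ forces $v_2(\ind B) = k_1 + 1$, no non-trivial rational $e^{2^j}$ exists in $C^*$ for $j \leq k_1$, and the dimensional cap gives $j_1 = k_1 = v_2(\ind B) - 1$. The main obstacle is a clean proof of the reverse inequality, namely showing that the rational subring of $\CH_K^*(R_{K/F}(\SB(B)))$ is no larger than the image of $R$ applied to the rational subring of $\CH^*(\SB(B))$ over $K$. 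I would tackle this via Karpenko's theorem on upper motives \cite[Theorem~1.1]{Ka10} combined with the Weil-restriction formalism from \cite{Ka00, Ka12b, Ka15}, which together descend rationality questions on the Weil restriction to the corresponding questions on the original Severi-Brauer variety.
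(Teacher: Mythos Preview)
Your approach differs structurally from the paper's, and while the overall strategy of separating the two tensor factors of $C^*$ is sound, there are a concrete error and a genuine gap.

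\textbf{Concrete error.} You claim that over a splitting field $e$ restricts to $h_1+h_2$ on $R_{K/F}(\SB(B))_L\cong\PP^{N-1}\times\PP^{N-1}$. This contradicts your own identification $e=R(h)$: Karpenko's transfer map $R$ from \cite{Ka00} sends $\alpha$ to the external product $\alpha\times\sigma(\alpha)$, so $R(h)=h_1\cdot h_2$, not $h_1+h_2$. (Indeed $h_1+h_2$ lies in the normed ideal, being the corestriction of $h$, so it vanishes in $\CH^*_K$; the codimension-$2$ generator must be $h_1 h_2$.) Your Frobenius computation of $e^{2^j}$ is therefore wrong, and with it the stated mechanism for the lower bound.

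\textbf{Genuine gap.} Even after correcting the identification, the reverse inequality --- that rationality of the image of a cycle in $C^*$ forces $2^{j}\ge\ind B$ --- does not follow from the motivic Weil-restriction formulae \eqref{weil1}--\eqref{weil3} alone. The subtlety is that $j_1$ is determined by rational cycles on the \emph{product} variety (or on $E/B$), not on $R_{K/F}(\SB(B))$ in isolation; a rational cycle on the product could in principle project to $e^{2^j}$ in $C^*$ without arising from a rational class on $R_{K/F}(\SB(B))$. The paper handles this by an entirely different route: it proves a Poincar\'e-polynomial identity (Proposition~\ref{polynom}) for the product $\mathcal{Y}=\mathfrak{X}\times R_{K/F}(\SB(A))$ and for $\mathfrak{X}_{F_A}$, introduces the set $J=\{j:1\times R(h)^j\text{ is $F$-rational}\}$ via a generic-fibre lemma, and uses closure of $J$ under addition together with a \emph{self-duality/symmetry argument} to force $J$ to be an arithmetic progression, giving $F(t)=(t^{2m}-1)/(t^{2j}-1)$. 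The exact value $j_1=\min\{k_1,v_2(\ind A)\}$ (and in particular the lower bound you are missing) is then obtained by passing to the variety $\mathcal{Z}$ of maximal isotropic ideals, descending rationality of $R(h)^{j}$ to $K(\mathcal{Z})$, and applying the index reduction formula of \cite{MPW} to compute $\ind A_{K(\mathcal{Z})}=\gcd(n+1,\ind A)$. None of this is captured by invoking \cite{Ka10} and the Weil-restriction formalism.

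\textbf{On $k_1$.} Finally, you treat $2^{k_1}\parallel\frac{\deg B}{2}$ as already known from the discussion of $C^*$, but in the paper that discussion explicitly defers to the present proof. The bound $k_1\le v_2(\deg B)-1$ is established here by testing divisibility of the Poincar\'e polynomial of a carefully chosen generically quasi-split variety $Y$ (of type $\{1,\ldots,2n+1\}\setminus\{1,n+1,2n+1\}$) by that of the upper motive; your proposal omits this step entirely.
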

\begin{proof}
Without loss of generality we can pass to an odd degree field extension and assume that the index of $B$ is a power of $2$. We denote by $A$ the underlying division algebra.

Let $\mathfrak{X}$ be a generically quasi-split variety under the action of the group $\Aut_K(B,\tau)$. Let $U$ be the normed upper motive of $\mathfrak{X}$. Consider the variety ${\mathcal{Y}:=\mathfrak{X} \times R_{K/F}(\SB(A))}$. The variety $\mathcal{Y}$ is generically quasi-split under the action of the group  
$${\Aut_K(B,\tau)\times R_{K/F}(\PGL_1(A))}.$$ Since both varieties $\mathfrak{X}_{F(\mathcal{Y})}$ and $\mathcal{Y}_{F(\mathfrak{X})}$ have rational points, the normed upper motives of $\mathfrak{X}$ and $\mathcal{Y}$ are isomorphic: $\U_K(\mathcal{Y}) \simeq \U_K(\mathfrak{X})=:U$. Denote by $m$ and $F_A$ respectively the degree of $A$ and the function field of the variety $R_{K/F}(\SB(A))$.

Applying Proposition \ref{polynom} to the generically quasi-split varieties $\mathcal{Y}$ and  $\mathfrak{X}_{F_A}$ we obtain
\begin{equation}
\label{pol1}
P(\CH^*_K(\mathcal{Y}),t) = P(\CH^*_K(U),t) \cdot P(\overline{\CH}_K^*(\mathcal{Y}), t)
\end{equation}
\noindent and
\begin{equation}
\label{pol2}
P(\CH^*_K(\mathfrak{X}_{F_A}), t) = P(\CH^*_K(U^{F_A}),t) \cdot P(\overline{\CH}_K^*(\mathfrak{X}_{F_A}), t) \, ,
\end{equation}
where $U^{F_A}$ denotes the normed upper motive of $\mathfrak{X}_{F_A}$.

Note that $P(\CH^*_K(\mathcal{Y}),t) = P(\CH^*_K(\mathfrak{X}),t) \cdot P(\CH^*_K(R_{K/F}(\SB(A))),t) $ and, since over a splitting field of $A$ the variety $\SB(A)$ is isomorphic to $\PP^{m-1}$ and the motive of $\PP^{m-1}$ is a direct sum of Tate motives with the Poincar\'e polynomial
$\frac{t^{m}-1}{t-1}$, we have
by formulae~\eqref{weil1} and \eqref{weil3} that $P(\CH^*_K(R_{K/F}(\SB(A))),t) = \frac{t^{2m}-1}{t^2-1}$.

Dividing equation~\eqref{pol1} by \eqref{pol2} we obtain

\begin{equation}
\label{pol-eq}
\frac{t^{2m}-1}{t^2-1} = \frac{P(\CH^*_K(U),t)}{P(\CH^*_K(U^{F_A}),t)} \cdot \frac{P(\overline{\CH}_K^*(\mathcal{Y}), t)}{P(\overline{\CH}_K^*(\mathfrak{X}_{F_A}), t)} \, . 
\end{equation}

Our next goal is to describe the quotient $P(\overline{\CH}_K^*(\mathcal{Y}), t)/P(\overline{\CH}_K^*(\mathfrak{X}_{F_A}), t)$. We follow similar arguments as in \cite[\S3]{Zh}. We refer to Section~\ref{sec3} for the notation $R(h)$ below.

\begin{lem} 
\label{lemma}
Let $y= a_k \times R(h)^k + \Sum_{i>k} a_i\times R(h)^i$, $a_i \in \CH_K^*(\overline{\mathfrak{X}})$, be an element in $\CH_K^*(\overline{\mathcal{Y}})$, where $h \in  \CH_K^*(\overline{\SB(A)})$ denotes the class of a hyperplane section on $\overline{\SB(A)} \simeq \mathbb{P}^{m-1} $. If $y$ is rational over $F$, then 
\begin{enumerate}
\item $1 \times R(h)^k$ is rational over $F$,
\item $a_k$ is rational over $F_A$.
\end{enumerate}
\end{lem}
\begin{proof} Since both varieties $\mathfrak{X}$ and $R_{K/F}(\SB(A))$ are quasi-split over the function field of $\mathfrak{X}$, we can follow the proof of  \cite[Lemma~3.2]{Zh} by replacing the usual Chow rings $\CH$ by the normed Chow rings $\CH_K$.
\end{proof}

Using Lemma~\ref{lemma}, similarly as in \cite[Proposition~3.4]{Zh} we can show that 
the polynomial $P(\overline{\CH}_K^*(\mathfrak{X}_{F_A}), t)$ divides $P(\overline{\CH}_K^*(\mathcal{Y}), t)$
and the quotient is given by $F(t):=\Sum_{i \in J} t^i $, where

\begin{equation}
\label{1xh}
J=\{0\leq j < m \mid 1\times R(h)^j \text{ is }F\text{-rational in }\CH_K^*(\overline{\mathcal{Y}})    \} \, .
\end{equation}

Now we observe two properties of the set $J$. The first property
\begin{equation}
\label{property1}
x\in J,\, y\in J \text{ and } x+y<m \quad \Longrightarrow\quad  x+y\in J
\end{equation}

\noindent follows from the definition of $J$, since the product of two $F$-rational cycles is $F$-rational.
We also claim that
\begin{equation}
\label{property2}
\text{the set } J \text{ is symmetric with respect to its midpoint.}    
\end{equation}
\noindent To show this property it is equivalent to check that the coefficients of $F(t)$ are symmetric with respect to the midpoint, that is $F(t)= t^{\deg F}F(t^{-1})$. Note that all Poincar\'e polynomials from equality~\eqref{pol-eq}
have symmetric coefficients with respect to the midpoint. Indeed, it is clearly true for $\frac{t^{2m}-1}{t^2-1}$. Since $P(\CH^*_K(M^*),t) =P(\CH^*_K(M),t^{-1})$ for a split motive $M$ and its dual motive $M^*$, it follows from \cite[Proposition~5.2]{Ka10b} that both polynomials $P(\overline{\CH}_K^*(\mathfrak{X}_{F_A}), t)$ and $P(\overline{\CH}_K^*(\mathcal{Y}), t)$ have symmetric coefficients with respect to the midpoint. Therefore, using equality~\eqref{pol-eq}, we have $F(t)= t^{\deg F}F(t^{-1})$ and, thus, the set $J$ is symmetric with respect to its midpoint.

It follows from properties~\eqref{property1}, \eqref{property2} and equality~\eqref{pol-eq} that
$$F(t)= 1+t^{2j}+t^{4j}+ \ldots +t^{2(k-1)j} = \frac{t^{2m}-1}{t^{2j}-1}  ,$$
for some $j$ dividing $m$ and $k=m/j$. Since $m$ is a power of two, we write $j=2^{j_1}$.

Replacing the last quotient in equation~\eqref{pol-eq} by $F(t)$ and using the above formula we obtain
$P(\CH^*_K(U),t) = \frac{t^{2j}-1}{t^2-1} \cdot  P(\CH^*_K(U^{F_A}),t) $.
The Poincar\'e polynomial of the (normed) upper motive $U^{F_A}$ is given by the $J$-invariant  of the Hermitian form associated with $(B_{F_A}, {\tau}_{F_A})$. Concatenating it with the additional component $j_1$ we obtain the $J$-invariant of $(B, \tau)$.

We show next that the parameter $k_1$ of the $J$-invariant satisfies $k_1<v_2(\deg B)$. We assume that we are in type $^2\A_{2n+1}$ and, thus, $\deg B=2n+2$. Consider the generically quasi-split variety $Y$ of parabolic subgroups of the group $\Aut_K(B,\tau)$ of type $$\{1,2,\ldots,2n+1\}\setminus\{1,n+1,2n+1\}.$$ This variety is generically quasi-split by \cite[p.~55, type $^2\A_n$]{Ti66}. Then the normed Poincar\'e polynomial of $Y$ must be divisible by the Poincar\'e polynomial of the normed upper motive $U$ of the variety of Borel subgroups. Thus, the polynomial $$P(\CH^*_K(Y),t)=\frac{P(\CH^*_K({}^2\A_{2n+1}/B),t)}{P(\CH^*_K(R_{K/F}(\A_{n-1}/B')),t)}=\frac{P(\CH^*_K({}^2\A_{2n+1}/B),t)}{P(\CH^*(\A_{n-1}/B')),t^2)}$$
is divisible by the polynomial $P(\CH^*_K(U),t)=(1+t)(1+t^3)\cdots(1+t^{2n+1})\cdot\frac{t^{2\cdot 2^{k_1}}-1}{t^2-1}$.

Computing the quotient $\frac{P(\CH^*_K(Y),t)}{P(\CH^*_K(U),t)}$ we obtain $\frac{t^{2(n+1)}-1}{t^{2\cdot 2^{k_1}}-1}$ which is a polynomial iff $2\cdot 2^{k_1}$ divides $2(n+1)$, i.e. iff $k_1\le v_2(n+1)=v_2(\deg B)-1$.

Denote $v_2(\ind A)$ by $i_A$. We claim that $j_1=\min \{k_1, i_A\}$ and $k_1=v_2(\deg B)-1$. We know that
$\min \{k_1, i_A\}$ is an upper bound for $j_1$, hence, it is sufficient to show that 
$j_1 \geq \min \{k_1, i_A\}$.  Recall that $j=2^{j_1}= \min \, J\backslash \{0\}$. It follows from Definition~\ref{1xh} of the set $J$ that the cycle $1\times R(h)^{j} \in \CH_K^*(\overline{\mathfrak{X} \times R_{K/F}(\SB(A))})$ is $F$-rational. Then the cycle 
$1\times 1\times R(h)^{j} \in \CH_K^*(\overline{\mathfrak{X} \times \mathcal{Z} \times R_{K/F}(\SB(A)))}$ is also $F$-rational, where $\mathcal{Z}$ is the variety of maximal parabolic subgroups of $\Aut_K(B,\tau)$ of type $\{1,2,\ldots, 2n+1\}\setminus\{n+1\}$, i.e., the variety of totally isotropic ideals in $B$ of reduced dimension $n+1$.
    
Note that the variety $\mathfrak{X}$ possesses  a rational point over the function field of the variety $\mathcal{Z} \times R_{K/F}(\SB(A))$. Therefore, by \cite[Lemma~2.3]{Zh} the cycle $${1\times R(h)^{j} \in \CH_K^*(\overline{ \mathcal{Z} \times R_{K/F}(\SB(A))})}$$ is $F$-rational. It follows that ${R(h)^{j} \in \CH_K^*(\overline{ R_{K/F}(\SB(A))})}$ is rational over $F(\mathcal{Z})$.
Therefore, passing to $K$ we find a rational cycle over $K(\mathcal{Z})$ of the form $${h^j\times h^j+\alpha\in\Ch^*(\overline{\SB(A)\times\SB(A)})}$$ in the ordinary Chow group, where $\alpha$ is some cycle which goes to zero in the respective normed Chow group.

We want to show that $j  \geq \ind A_{K(\mathcal{Z})}$. In general, if $D$ is a central simple algebra,
the first projection $\SB(D)\times\SB(D)\to\SB(D)$ is a projective bundle over $\SB(D)$. This allows to give an explicit combinatorial description of all rational cycles in $\Ch^*(\overline{\SB(D)\times\SB(D)})$ in terms of the index $\ind D$.

It follows in our situation that if $j<\ind A_{K(\mathcal{Z})}$,
the cycle $h^j\times h^j+\alpha$ cannot be rational independently of $\alpha$ as above.
Thus, $j  \geq \ind A_{K(\mathcal{Z})}$.

Note that $\mathcal{Z}_K \simeq \SB_{n+1}(B)$ and, thus, $K(\mathcal{Z}) \simeq K(\SB_{n+1}(B))$.
Then applying the Index Reduction Formula \cite[page~592]{MPW}, we obtain that $\ind A_{K(\mathcal{Z})} = \gcd\{n+1, \ind A \}$. It follows that $j_1 \geq \min \{k_1, i_A\}$ and $k_1=v_2(n+1)$.
\end{proof}

\begin{cor}\label{cor67}
In the notation of Proposition~\ref{prounit} ($\deg B$ is even) we have: $j_2=0$ or~$1$, and
$j_2=0$ iff the discriminant algebra $D(B,\tau)$ (see \cite{Ta76}, \cite[Definition~(10.28)]{KMRT}) is split.
\end{cor}
\begin{proof}
Let $m=\deg B$. For $j_2$ let $Y$ denote the variety of Borel subgroups of the group $\PGU(B,\tau)$ and consider the exact sequence (see \cite[\S2 and Corollary~2.8]{MT95})
$$0\to\Pic(Y)\xrightarrow{\res}\Pic(\overline Y)^\Gamma\xrightarrow{\alpha_Y}\Br(F),$$
where $\Gamma$ is the absolute Galois group and $\alpha_Y$ is the Tits map. Then the middle fundamental weight $\omega_{m/2}\in\Pic(\overline Y)^\Gamma$ maps to the discriminant algebra $D(B,\tau)$.

Therefore, $\omega_{m/2}$ is rational if and only if $D(B,\tau)$ is split. Since generic points of direct summands of the normed motive of $Y$ are precisely rational cycles and since $j_2$ has degree one (i.e., $d_2=1$), it follows that $j_2=0$ if and only if the group $\Pic(\overline Y)^\Gamma$ is rational, i.e. by the above considerations if and only if $D(B,\tau)$ is split.
\end{proof}

\begin{cor}
In the notation of Proposition~\ref{prounit} ($\deg B=2n+2$ is even) let $$(j_1,j_2,j_3,\ldots,j_{n+2})$$ be the $J$-invariant of $(B,\tau)$. Then the $J$-invariant of $(B,\tau)_{F(\SB(D(B,\tau)))}$ equals $$(j_1,0,j_3,\ldots,j_{n+2}),$$ i.e., apart from the value of $j_2$ the $J$-invariant does not change. 
\end{cor}
\begin{proof}
The proof is the same as of \cite[Theorem~4.1(1)]{Zh}.
\end{proof}

{\bf Orthogonal groups.}

We consider now the case $^2\D_n$. This case corresponds to a central simple algebra $A$ of degree $2n$ over $F$ with an orthogonal involution $\sigma$ with a non-trivial discriminant (if $\Char F=2$, then one should be more careful here. We omit the details and refer to \cite{KMRT}). As in the unitary case we assume first that the algebra $A$ is split. This case corresponds to a $2n$-dimensional quadratic form of a non-trivial discriminant. 

By \cite[Theorem~5.1]{Ka12a} the ordinary Chow motive modulo $2$ of the generic orthogonal Grassmannian $\OGr$ of isotropic $(n-1)$-dimensional subspaces (it corresponds to the parabolic subgroup of type $\{1,2,\ldots,n-2\}$) is indecomposable and the same method of the proof shows that its normed motive is indecomposable as well.  Since for a split algebra $A$ the upper motive of this Grassmannian is isomorphic to the upper motive of the variety of Borel subgroups, its Poincar\'e polynomial
equals the Poincar\'e polynomial of $^2\D_n/P_{\{1,2,\ldots,n-2\}}$.
The formula for the latter Poincar\'e polynomial follows directly from Proposition~\ref{solo}, namely, it equals
\begin{equation}\label{eqq}
\begin{aligned}
&P(\CH_K^*({}^2\D_n/B),t)/P(\CH_K^*({}^1\A_{n-2}/B),t)=\\
&\Big(\tfrac{t^2-1}{t-1}\cdot\tfrac{t^4-1}{t-1}\cdots\tfrac{t^{2n-2}-1}{t-1}\cdot\tfrac{t^n+1}{t+1}\Big)\cdot\Big(\tfrac{t-1}{t^2-1}\cdot\tfrac{t-1}{t^3-1}\cdots\tfrac{t-1}{t^{n-1}-1}\Big)=\prod_{k=2}^{n} (x^k+1).
\end{aligned}
\end{equation}

Since the motive of the generic orthogonal Grassmannian of isotropic $(n-1)$-dimensional subspaces is indecomposable, it follows that the ring $C^*$ from sequence~\eqref{normedc2} equals in the $^2\D_n$ case the normed Chow ring $\CH^*_K(\OGr(n-1,q))$ for a quasi-split quadratic form $q$ of dimension $2n$.

In \cite[Proposition~2.11]{Vi07} Vishik gives an explicit description of the Chow rings of all orthogonal Grassmannians. It follows immediately from his description that

\begin{equation}\label{ogr}
\CH^*_K(\OGr(n-1,q))\simeq\ff_2[e_2,e_{3},e_5,\ldots,e_{2[\frac{n+1}{2}]-1}]/(e_2^{2^{k_2}},e_3^{2^{k_3}},e_5^{2^{k_4}},\ldots,e_{2r-3}^{2^{k_r}})
\end{equation}
with $\codim e_l=l$, $r=[\frac{n+1}{2}]+1$, $k_2=[\log_2 n]$ and $k_i=[\log_2\frac{2n}{2i-3}]$, $i\ge 3$,
and this gives the respective parameters of the $J$-invariant for type $^2\D_n$ when the algebra $A$ is split.

Therefore, it remains to consider the case when the algebra $A$ is arbitrary.

Similarly to the unitary case we consider a generic generically quasi-split variety homogeneous under the group ${\PGO^+(A,\sigma)\times \PGL_1(A)}$ (in particular, the central simple algebra $A$ is generic),
which is the product of the orthogonal Grassmannian of parabolic subgroups of type $\{1,2,\ldots,n-2\}$ considered above and $\SB(A)$.

The respective quasi-split group $G^{qs}$ from sequence~\eqref{normedc2} is a central product of a group of type $^2\D_n$ and a group of type $^1\A_{2n-1}$. Since the normed Chow motive of a generic orthogonal Grassmannian $X$ of parabolic subgroups of type $\{1,2,\ldots,n-2\}$ is indecomposable,
its normed Chow group over a splitting field does not contain rational cycles of positive codimension.
Note also that $\CH^*_K(\PP^{2n-1})=\ff_2[h]/(h^{2n})$.
The proof of Proposition~\ref{prop69} below shows
that the subring of rational cycles on $X\times \SB(A)$ in the generic case is generated by
the cycle $1\times h$, if $n$ is even and by $1\times h^2$, if $n$ is odd,
and $\codim h=1$.

Therefore, the ring $C^*$ equals
\begin{equation}\label{eq714}
\ff_2[e_2,e_{3},e_5,\ldots,e_{2[\frac{n+1}{2}]-1}]/(e_2^{2^{k_2}},e_3^{2^{k_3}},e_5^{2^{k_4}},\ldots,e_{2r-3}^{2^{k_r}})\otimes \ff_2[e]/(e^{2^k})
\end{equation}
with $\codim e=1$, $k=\frac{1+(-1)^{n+1}}{2}$, and $r$ and $k_i$ as in formula~\eqref{ogr}.

\begin{prop}\label{prop69}
The (normed) $J$-invariant of a central simple algebra with an orthogonal involution $(A,\sigma)$ of a non-trivial discriminant is the concatenation of the $J$-invariant of the quadratic form associated with $\sigma_{F(\SB(A))}$ (computed above) and $${j_1=\min\{k_1,v_2(\ind A)\}}.$$

For respective parameters $d_1$ and $k_1$ we have $d_1=1$ and $k_1=\frac{1+(-1)^{n+1}}{2}$, where $2n$ is the degree of $A$.
\end{prop}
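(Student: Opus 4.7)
The plan is to imitate the proof of Proposition~\ref{prounit}, with the Weil restriction replaced throughout by the ordinary Severi--Brauer variety since $A$ is already defined over $F$. After a preliminary odd degree scalar extension I may assume $\ind A$ is a power of $2$. Set $\mathcal{Y}:=\mathfrak{X}\times\SB(A)$, where $\mathfrak{X}$ is a generic generically quasi-split variety under $\PGO^+(A,\sigma)$, and write $F_A:=F(\SB(A))$. Since $\mathfrak{X}_{F(\mathcal{Y})}$ and $\mathcal{Y}_{F(\mathfrak{X})}$ both have rational points, the normed upper motives $\U_K(\mathfrak{X})$ and $\U_K(\mathcal{Y})$ agree with a common $U$. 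Applying Proposition~\ref{polynom} to $\mathcal{Y}$ and to $\mathfrak{X}_{F_A}$ and dividing the two resulting identities yields
\[
\frac{t^{2n}-1}{t-1}\;=\;\frac{P(\CH^*_K(U),t)}{P(\CH^*_K(U^{F_A}),t)}\cdot\frac{P(\overline{\CH}^*_K(\mathcal{Y}),t)}{P(\overline{\CH}^*_K(\mathfrak{X}_{F_A}),t)},
\]
the left-hand side being $P(\CH^*_K(\SB(A)),t)$ since $\SB(A)\simeq\PP^{2n-1}$ over a splitting field of~$A$.

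Next I would prove the orthogonal analog of Lemma~\ref{lemma}: if $y=a_k\times h^k+\Sum_{i>k}a_i\times h^i$ is $F$-rational in $\CH^*_K(\overline{\mathcal{Y}})$, where $h$ is the hyperplane class on $\overline{\SB(A)}$ and $a_i\in\CH^*_K(\overline{\mathfrak{X}})$, then $1\times h^k$ is $F$-rational and $a_k$ is $F_A$-rational. The argument of \cite[Lemma~3.2]{Zh} transports verbatim to the normed setting. As in \cite[Proposition~3.4]{Zh}, this gives
\[
\frac{P(\overline{\CH}^*_K(\mathcal{Y}),t)}{P(\overline{\CH}^*_K(\mathfrak{X}_{F_A}),t)}\;=\;F(t):=\Sum_{j\in J}t^j,\quad J=\{0\le j\le 2n-1\mid 1\times h^j\ \text{is $F$-rational}\}.
\]
The set $J$ contains $0$, is closed under addition whenever the sum is at most $2n-1$, and is symmetric about its midpoint (all three Poincaré polynomials in the identity above are palindromic by \cite[Proposition~5.2]{Ka10b}). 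The elementary combinatorial argument from the proof of Proposition~\ref{prounit} then forces $F(t)=(t^{2n}-1)/(t^j-1)$ for some divisor $j$ of $2n$, whence $P(\CH^*_K(U),t)=\frac{t^j-1}{t-1}\cdot P(\CH^*_K(U^{F_A}),t)$. Over $F_A$ the algebra $A$ splits, so $U^{F_A}$ realizes the $J$-invariant of the quadratic form associated with $\sigma_{F_A}$; identifying $C^*$ via \eqref{eq714} and matching with formula~\eqref{fpoin}, the extra factor $\frac{t^j-1}{t-1}$ must correspond to the codimension one generator $e$, which forces $j=2^{j_1}$ with $0\le j_1\le k_1=(1+(-1)^{n+1})/2$. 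This also confirms en route the claimed description of the image of $c$ on the $\PP^{2n-1}$-factor.

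Finally, to pin down $j_1$ I would take $\mathcal{Z}$ to be a suitable generically quasi-split twisted flag variety under $\PGO^+(A,\sigma)$ on which $\mathfrak{X}$ becomes rational over $F(\mathcal{Z}\times\SB(A))$, for example the variety of maximal isotropic ideals. By \cite[Lemma~2.3]{Zh} the $F$-rationality of $1\times h^j$ in $\CH^*_K(\overline{\mathcal{Y}})$ descends to the rationality of $h^j\in\Ch^*(\overline{\SB(A)})$ over $K(\mathcal{Z})$, and the standard projective-bundle description of rational cycles on $\overline{\SB(A)\times\SB(A)}$ forces $j\ge\ind A_{K(\mathcal{Z})}$. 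The Index Reduction Formula \cite[p.~592]{MPW} then provides the precise value of $\ind A_{K(\mathcal{Z})}$, which combined with the upper bound $j_1\le k_1$ yields $j_1=\min\{k_1,v_2(\ind A)\}$. The main obstacle will be the combinatorial extraction of $F(t)=(t^{2n}-1)/(t^j-1)$ with $j$ a power of $2$: when $n$ is odd the ring $C^*$ carries a nontrivial factor $\ff_2[e]/(e^2)$ at codimension one, and the compatibility of its parity constraints with the symmetry of $J$ must be verified carefully, in particular to exclude divisors $j$ of $2n$ that are not pure powers of two.
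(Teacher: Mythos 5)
Your overall plan (imitate the unitary case of Proposition~\ref{prounit}, following \cite[\S3]{Zh}, to obtain the polynomial identity $P(\CH^*_K(U),t)=\frac{t^j-1}{t-1}\,P(\CH^*_K(U^{F_A}),t)$) matches what the paper does: its proof of Proposition~\ref{prop69} simply says ``proceeding as in \cite[\S3]{Zh} we arrive at the following identity \ldots where $j=2^{j_1}$'' and then spends its effort on a different point, namely pinning down $k_1$. That point is exactly where your proposal has a genuine gap.

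Your argument for $j_1\le k_1=(1+(-1)^{n+1})/2$ (and implicitly for $j$ being a pure power of $2$) is circular: you ``identify $C^*$ via~\eqref{eq714} and match with formula~\eqref{fpoin}.'' But the text directly preceding \eqref{eq714} states that it is the \emph{proof of Proposition~\ref{prop69}} that establishes the image of $c$ on the $\PP^{2n-1}$-factor, and hence the value of $k=k_1$ in \eqref{eq714}. You are therefore invoking the conclusion as an input. Similarly, using \eqref{fpoin} presumes the cyclotomic form of $P(\CH^*_K(U),t)$, which requires the presentation~\eqref{pres} of $C^*$ to already be in hand. You also flag but do not resolve the ``main obstacle'' of excluding divisors $j$ of $2n$ that are not powers of two; in the unitary case that step works because one passes to the underlying \emph{division} algebra so that the ambient modulus is a power of $2$, whereas you work with $\SB(A)$ of degree $2n$, so $j\mid 2n$ gives no such control.

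The paper closes this gap with an argument you do not have: it considers the generically quasi-split variety $Y$ of parabolic subgroups of type $\{1,\ldots,n\}\setminus\{1,n-1,n\}$, observes that $P(\CH^*_K(Y),t)$ must be divisible by $P(\CH^*_K(U),t)$, and computes the quotient to be $\frac{t^{n-1}-1}{t^{2^{k_1}}-1}$, forcing $2^{k_1}\mid n-1$. For $n$ even this instantly gives $k_1=0$. For $n$ odd one has $\ind A\le 2$, so $k_1\le 1$, and $k_1\ne 0$ by a Tits-algebra/Picard-group argument as in Corollary~\ref{cor67} (using \cite[Corollary~2.11]{MT95}); hence $k_1=1$. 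None of this appears in your proposal, and without it the upper bound on $j_1$ is unjustified. The lower-bound step you sketch (rationality of $1\times h^j$ descending via $\mathcal Z$ and the Index Reduction Formula) is plausible but also needs the $\ind A\le 2$ fact for $n$ odd, which you never invoke.
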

\begin{proof}
Let $U$ denote the normed upper motive of a generically quasi-split variety for the group $\PGO^+(A,\sigma)$ and $U^{F_A}$ the normed upper motive of the same variety over the function field $F_A$ of the Severi--Brauer variety $\SB(A)$.
Proceeding as in \cite[\S3]{Zh} we arrive at the following identity for the Poincar\'e polynomials $$P(\CH^*_K(U),t) = \frac{t^{j}-1}{t-1} \cdot  P(\CH^*_K(U^{F_A}),t), $$
where $j=2^{j_1}$ for $j_1$ we are looking for. Moreover, the maximal possible value of $j_1$ under all groups of type $^2\D_n$ is precisely the parameter $k=k_1$ from formula~\eqref{eq714}.

We show now that $2^{k_1}\mid(\frac{\deg A}{2}-1)=n-1$. Consider the generically quasi-split variety $Y$ of parabolic subgroups of type $\{1,2,\ldots,n\}\setminus\{1,n-1,n\}$. This variety is generically quasi-split by \cite[p.~57, type $^2\D_n$]{Ti66}. Then the normed Poincar\'e polynomial of $Y$ must be divisible by the Poincar\'e polynomial of the normed upper motive $U$ of the variety of Borel subgroups. Thus, the polynomial $$P(\CH^*_K(Y),t)=\frac{P(\CH^*_K({}^2\D_{n}/B),t)}{P(\CH^*_K(\A_{n-3}/B'),t)}$$
is divisible by the polynomial $P(\CH^*_K(U),t)=(1+t^2)(1+t^3)\cdots(1+t^n)\cdot\frac{t^{2^{k_1}}-1}{t-1}$.

Computing the quotient $\frac{P(\CH^*_K(Y),t)}{P(\CH^*_K(U),t)}$ we obtain $\frac{t^{n-1}-1}{t^{2^{k_1}}-1}$ which is a polynomial iff $2^{k_1}$ divides $n-1$.

In particular, if $n$ is even, then $k_1=0$. On the other hand, if $n$ is odd, then the index of the algebra $A$ is at most $2$. Therefore, if $n$ is odd, then $k_1\le 1$ and as in the proof of Corollary~\ref{cor67} using \cite[Corollary~2.11]{MT95} one can see that $k_1\ne 0$, since the normed Picard group of the variety of Borel subgroups is not rational. Therefore, $k_1=1$ if $n$ is odd.
\end{proof}

\begin{rem}
Combining formulae~\eqref{eqq} and \eqref{ogr} we have the following combinatorial identity:
$$\sum_{i=1}^{[\frac{n+1}{2}]-1}[\log_2\tfrac{2n}{2i+1}]=n-1-[\log_2 n].$$
\end{rem}

\begin{rem}
If our group has type $^2\D_n$ with an even $n$, then by the classification of Tits indices \cite[p.~57, type $^2\D_n$]{Ti66} the variety of its parabolic subgroups of type $\{1,2,\ldots,n-2\}$ is generically quasi-split.

We remark also aside that a similar situation occurs in the unitary case. If $n$ is even, then the variety of parabolic subgroups of type $\{1,2,\ldots, n\}\setminus\{\frac{n}{2},\frac{n}{2}+1\}$ is generically quasi-split for groups of type $^2\A_n$. This is the maximal unitary Grassmannian.
\end{rem}

\begin{rem}
In \cite{Vi05} Vishik defined the $J$-invariant of quadratic forms.  Note that our version of the $J$-invariant is not equivalent to the $J$-invariant of Vishik in case when the quadratic form has a non-trivial discriminant.

Consider an anisotropic excellent even-dimensional quadratic form $q$ with a non-trivial discriminant over a field of characteristic different from $2$. This form is given by a strictly decreasing sequence of embedded Pfister forms $\pi_0 \supset \pi_1 \supset \ldots \supset \pi_s$, where $s$ is a positive integer and
$\dim(\pi_{s-1}) > 2 \dim(\pi_s)$. Since the discriminant of $q$ is non-trivial, the last Pfister form $\pi_s$ is a $1$-fold Pfister form given by the discriminant of $q$.

The normed $J$-invariant of the respective group of 
type $^2\D$ equals $(j_1,j_2,\ldots,j_r)$, where $j_i$ of degree $d_i$ equals $1$, if $d_i=\dim\pi_{s-1}/2-1$ and equals $0$ otherwise.

Note that the $J$-invariant defined in \cite{Vi05} is not equivalent to our $J$-invariant in this case, since the quadratic form $q$ is divisible by the binary form $\pi_s$ given by its discriminant.
\end{rem}

The following result is a counterpart of Corollary~\ref{cor67} for type $^2\D_n$. Note that opposite to the unitary case the degree of $j_2$ is $d_2=2$.

\begin{cor}\label{cor718}
In the above notation we have:
$j_2=0$ iff the (even) Clifford algebra $C(A,\sigma)$ is split.
\end{cor}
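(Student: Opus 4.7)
The plan is to mirror Corollary~\ref{cor67}, upgraded to codimension two and with the even Clifford algebra in place of the discriminant algebra. By Proposition~\ref{prop69} the entry $j_2$ has $d_2=2$ and corresponds to the generator $e_2$ in the presentation~\eqref{eq714} coming from the orthogonal-Grassmannian factor~\eqref{ogr}, so $j_2=0$ is equivalent to the existence of an $F$-rational lift of $e_2\in\CH_K^2(\overline{\OGr(n-1,q)})$, where $q$ denotes the quadratic form associated with $\sigma_{F(\SB(A))}$.

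First I would dispose of the case when $A$ is split: then $(A,\sigma)$ comes from a $2n$-dimensional quadratic form $q$ of non-trivial discriminant and $C(A,\sigma)=C_0(q)$. Write $\omega_{n-1},\omega_n\in\Pic(\overline{\OGr(n-1,q)})$ for the two half-spin fundamental weights. In type $^2\D_n$ the $*$-action of $\Gamma$ from Section~\ref{sec2} swaps $\omega_{n-1}$ and $\omega_n$, so their product $\omega_{n-1}\cdot\omega_n\in\CH^2(\overline{\OGr(n-1,q)})$ is Galois-invariant. Using Vishik's description of $\CH^*(\OGr(n-1,q))$ from \cite[Proposition~2.11]{Vi07}, I would verify that modulo the ideal generated by $F$-rational codimension-one classes, the generator $e_2$ coincides with $\omega_{n-1}\cdot\omega_n$. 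Next, the Merkurjev--Tignol exact sequence \cite[\S2]{MT95} applied over $K$ shows that each of $\omega_{n-1}$, $\omega_n$ is $K$-rational exactly when its Tits algebra is trivial in $\Br(K)$; since these Tits algebras are Brauer-equivalent to the two components $C^{\pm}(A,\sigma)$ and hence to $C(A,\sigma)$ itself, a descent argument from $K$ to $F$ of a $K$-rational lift produces an $F$-rational lift of $\omega_{n-1}\cdot\omega_n$ if and only if $C(A,\sigma)$ is split over $K$, as required.

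For general $A$ I would reduce to the split case by passing to $F_A=F(\SB(A))$. By Proposition~\ref{prop69} the degree-two entry of the $J$-invariant of $(A,\sigma)$ agrees with that of the $J$-invariant of $q_{F_A}$, so it suffices to check that $C(A,\sigma)$ is split over $K$ iff $C_0(q_{F_A})\simeq C(A,\sigma)_{K_{F_A}}$ is split. One direction is immediate. For the other I would invoke the index reduction formula \cite[p.~592]{MPW}: the Brauer class $[C(A,\sigma)]\in\Br(K)$ does not drop in index upon extension to $K_{F_A}$, because by the description of Tits algebras of type $^2\D_n$ this class does not lie in the subgroup of $\Br(K)$ generated by $[A_K]$.

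The main obstacle is the identification $e_2\equiv\omega_{n-1}\cdot\omega_n$ in the split case, which requires unpacking Vishik's explicit generators and the combinatorial description of the characteristic map from \cite{De74} in type $\D_n$. A purely motivic alternative that avoids this step is to compare the normed upper motive of the variety of Borel subgroups of $\PGO^+(A,\sigma)$ before and after generically splitting $C(A,\sigma)$ over $K$, and conclude by Karpenko's upper-motive theorem \cite[Theorem~1.1]{Ka10}: if $C(A,\sigma)$ is non-split, scalar extension to the corresponding Severi--Brauer function field forces a strictly finer decomposition of the normed upper motive, which by the construction of $j_2$ forces $j_2>0$.
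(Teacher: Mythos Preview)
Your reduction from general $A$ to split $A$ is close to the paper's, and with a bit more care it works: the index--reduction step needs precisely the relation $[C(A,\sigma)]\cdot[A_K]=[{}^{\iota}C(A,\sigma)]$ in $\Br(K)$ (this is \cite[Theorem~9.12]{KMRT}), from which one deduces that if $C(A,\sigma)$ is non-split then $[C(A,\sigma)]\notin\langle[A_K]\rangle$. Simply asserting ``by the description of Tits algebras'' is not enough here; without that relation the index could in principle drop.

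The real gap is in the split-$A$ case. Your plan hinges on a ``descent argument from $K$ to $F$'' showing that $K$-rationality of $\omega_{n-1}$ and $\omega_n$ forces $F$-rationality of $\omega_{n-1}\cdot\omega_n$ in the normed Chow group. But the only general transfer available is corestriction, and $\cores_{K/F}$ of a $K$-rational lift of $\omega_{n-1}\cdot\omega_n$ lands on $2\,\omega_{n-1}\cdot\omega_n=0$ in $\Ch^2$; a Galois-invariant $K$-rational codimension-two class need not be $F$-rational modulo~$2$. So neither implication is actually established by your argument, and the proposed identification $e_2\equiv\omega_{n-1}\cdot\omega_n$ (even if carried out) does not by itself resolve this. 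Your ``purely motivic alternative'' via upper motives is too vague to fill the gap: you would still need to exhibit, for non-split $C(A,\sigma)$, a strictly finer decomposition after base change, which is essentially the statement you are trying to prove.

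The paper avoids these difficulties by arguing quite differently in the split-$A$ case. For $C(A,\sigma)$ split it invokes Panin's theorem \cite{Pa94} that the $K^0$-motive of the Borel variety is quasi-split, and then pushes this down to $\Ch^2$ via the (surjective) second Chern class to get rationality of all of $\Ch_K^2(\overline X)$, hence $j_2=0$. For the converse it runs the generic splitting tower of the quadratic form: since the Clifford invariant is unchanged under function fields of quadrics of dimension $>4$, one reduces to a $4$-dimensional penultimate form, i.e.\ a conic fibration whose generically quasi-split variety is a $2$-dimensional quadric, and there $j_2=0$ would make $\Ch^2$ rational, contradicting anisotropy by Springer. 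Neither step uses the product $\omega_{n-1}\cdot\omega_n$ or any codimension-one descent.
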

\begin{proof}
First we pass to the function field $E:=F(\SB(A))$. By Proposition~\ref{prop69} the value of $j_2$ does not change, but the value of $j_1$ becomes zero.

We claim that the Clifford algebra $C(A,\sigma)$ is split over $K$ iff the Clifford algebra $C(A_{E},\sigma_{E})$ is split over $K(\SB(A_K))$. Indeed, by the index reduction formula \cite{SV92} we have $$\ind C(A_E,\sigma_E)=\ind C(A,\sigma)_{K(\SB(A_K))}=\gcd_{1\le i\le \deg A} \ind(C(A,\sigma)\otimes_K A_K^{\otimes i}).$$
We have that $\exp A\mid 2$ and by \cite[Proof of Theorem~9.12]{KMRT} $C(A,\sigma)\otimes_K A_K$ is Brauer-equivalent to the conjugate algebra $^\iota C(A,\sigma)$ defined in \cite[Section~3.B]{KMRT}. Moreover, the algebras $C(A,\sigma)$ and $^\iota C(A,\sigma)$ are split or not split simultaneously. Therefore,
$\ind C(A_E,\sigma_E)=1$ iff $\ind C(A,\sigma)=1$.

Thus, we can assume without loss of generality that the algebra $A$ is split over $F$.
This case corresponds to a quasi-split group $G^{qs} = \PGO^+(q)$ for some $2n$-dimensional quadratic form  $q$ with a non-trivial discriminant. 

Assume now that the Clifford algebra is split as well. Then the orthogonal Grassmannian $X=\OGr(n-2, q)$ is a generically quasi-split $G^{qs}$-variety. Computing  the normed Poincar\'e polynomial by Proposition \ref{solo} we obtain 
$$P(\CH^*_K(X),t)=\frac{P(\CH^*_K({}^2\D_{n}/B),t)}{P(\CH^*_K(\A_{n-3}/B'),t) P(\CH^*_K(^2(2\,{\A_{1}})/B''),t)} = 1+t+t^2 + \ldots \, ,$$
\noindent where $`` \ldots "$ stands for a sum of monomials of degree $\geq 3$.

Since $A$ is split, note that the generator $w$ of $\CH^1_K(\overline{X}) \simeq \ff_2$ is $F$-rational, and moreover $w^2 \neq 0$. Hence, we have $P(\overline{\CH}_K^*(X), t) = 1+t+t^2 + \ldots \, $, where  $ `` \ldots "$ stands for a sum of monomials of degree $\geq 3$.

Assume that $j_2 \neq 0$. Then by formula~\eqref{J-formula} below (note that the proof of formula~\eqref{J-formula} is independent of the present corollary) the normed Poincar\'e polynomial of the upper motive $U$ of $X$ over a splitting field has the factor
$$ \frac{(t^{2})^{2^{j_2}}-1}{t^{2}-1} = 1+t^2 + \ldots \, , $$
\noindent where $ `` \ldots "$ stands for a sum of monomials of degree $\geq 3$.
Therefore,  we have $$P(\CH^*_K(U),t) \cdot P(\overline{\CH}_K^*(X), t) = 1 +t + 2t^2 + \ldots \, ,$$ and this contradicts Proposition~\ref{polynom}. Thus, $j_2 =0$.

Conversely, assume that $A$ is split and $j_2=0$.
We consider the splitting tower of the respective quadratic form $q$. The last anisotropic quadratic form in the splitting tower is a binary form given by the discriminant of $q$. We pass to the penultimate step in the splitting tower.

Assume that the Clifford algebra of $q$ is not split.
Note that by the index reduction formula the Clifford algebra remains non-split, if we pass to the function field of any quadratic form of dimension bigger than $4$. Therefore, we can assume that the penultimate quadratic form in the splitting tower of $q$ is $4$-dimensional. But then the respective generically quasi-split variety is just a $2$-dimensional quadric. But if $j_2=0$, then $\Ch^2$ of this quadric is rational, and this quadric cannot be anisotropic by the Springer theorem.
\end{proof}

\begin{cor}
In the above notation let $(j_1,j_2,j_3,\ldots,j_{[\frac{n+1}{2}]+1})$ be the $J$-invariant of $(A,\sigma)$. Then the $J$-invariant of $(A,\sigma)_{F(R_{K/F}(\SB(C(A,\sigma))))}$ equals $(j_1,0,j_3,\ldots,j_{[\frac{n+1}{2}]+1})$, i.e., apart from the value $j_2$ the $J$-invariant does not change.
\end{cor}
\begin{proof}
The proof is the same as of Proposition~\ref{prounit}.
\end{proof}

Summarizing above considerations we see that the ring $C^*$ from sequence~\eqref{normedc2} has presentation~\eqref{pres} for all absolutely simple quasi-split algebraic groups. By the K\"unneth formula the same holds for products of absolutely simple quasi-split groups. Moreover, the same holds for the Weil restrictions of absolutely simple quasi-split groups. Indeed, let $G$ be quasi-split absolutely simple algebraic group over a field extension $L$ of the base field $F$ and let $B$ be a Borel subgroup of $G$ defined over $L$.

Recall that $[K:F]=p^n$ (where $K$ is the field from the definition of normed Chow rings such that the group $G_K$ is of inner type) for some prime number $p$ and for some $n\ge 1$.
If the group $G$ is split over $L$, then by the properties of the Weil restrictions (see Section~\ref{sec3}) the functor $R_{L/F}$ induces an isomorphism $\Ch^*(G/B)\xrightarrow{\simeq} \CH^*_K(R_{L/F}(G/B))$. Finally, if $G$ is a not split over $L$, then the functor $R_{L/F}$ induces an isomorphism  $\CH^*_K(G/B)\xrightarrow{\simeq} \CH^*_K(R_{L/F}(G/B))$.

Thus, the ring $\CH^*_K(R_{L/F}(G/B))$ has presentation~\eqref{pres}.
Therefore, the ring $C^*$ has presentation~\eqref{pres} for all adjoint semisimple groups. Since every semisimple group is a twist of a quasi-split group by means of a cocycle with values in an adjoint semisimple quasi-split group, we can now define the (normed) $J$-invariant for arbitrary semisimple groups (more precisely, for torsors under an adjoint semisimple quasi-split group) exactly as in Definition~\ref{def71} with Chow rings replaced by the normed Chow rings and with $\Ch^*(G)$ replaced by $C^*$.

Now we are ready to fill the following table which contains the parameters of the $J$-invariant for absolutely simple groups of outer type.

\begin{center}
\begin{longtable}{l|l|l|l|l|l}
\caption{Parameters of the $J$-invariant for normed motives (outer type)}\label{tab5}\\
Adjoint group $G$ & $p$ & $r$ & $d_i$, $i=1\ldots r$ & $k_i$, $i=1\ldots r$ & Restrictions on $j_i$\\
\hline
${}^2\A_{2n}$       & $2$ & $n$           &  $2i+1$  & $1$ & if $d_i+l=d_m$ and\\
&&&&&$2\nmid\binom{d_i}{l}$, then $j_m \le j_i$\\
${}^2\A_{2n+1}$            & $2$       & $n+2$      & $2,\,i=1$ & $2^{k_1}\parallel (n+1)$         & the same restrictions \\
     &  &                 & $2i-3,\,i\ge 2$  &$1$   & assuming $i,m\ge 2$\\
${}^2\D_{n}$       & $2$ & $[\frac{n+1}{2}]+1$           &  $1,\,i=1$  & $\tfrac{1+(-1)^{n+1}}{2}$ &  if $d_i + l = 2^sd_m$ and \\
&  &                 & $2,\,i=2$ & $[\log_2 n]$ & $2\nmid\binom{d_i}{l}$, then $j_m \le j_i+s$\\
&  &                 & $2i-3,\,i\ge 3$  &$[\log_2\frac{2n}{2i-3}]$   & assuming $i,m\ge 2$\\
${}^3\D_4$
& $3$       & $1$            & $4$  & $1$                          & \\
${}^2\E_6$                   & $2$       & $3$                  & $3,\,5,\,9$ & $1,\,1,\,1$    & $j_1\ge j_2\ge j_3$\\
$R_{L/F}(G)$ & $p$ & $r(G)$ & $[L:F]\cdot d_i(G)$ & $k_i(G)$ & as for $G$\\
\end{longtable}
\end{center}

The restrictions on $j_i$ in the last column of the table follow from \cite[Theorem~7.2]{Fi19}, \cite[Proposition~2.9]{Vi07} and for the type ${}^2\E_6$ from Table~\ref{jinv} (type $\E_7$).

Moreover,
the following proposition can be proved exactly in the same way as \cite[Theorem~5.13]{PSZ08}.

\begin{thm}\label{prop612}
Let $G_0$ be a quasi-split semisimple algebraic group over a field $F$, let $K$ be a minimal field extension such that the group $(G_0)_K$ is of inner type, let $B$ be a Borel subgroup of $G_0$, let $E$ be a $G_0$-torsor over $F$ and let $p$ be a torsion prime of $G_0$. Assume that $[K:F]=p^n$ for some $n\ge 1$.

Then the normed Chow motive of $E/B$ decomposes into a direct sum of Tate twists of an indecomposable motive $R_p(E)$, and the Poincar\'e polynomial of $R_p(E)$ over a splitting field of $E$ equals
\begin{equation}
\label{J-formula}
\prod_{i=1}^r\frac{t^{d_ip^{j_i}}-1}{t^{d_i}-1},
\end{equation}
where the parameters $r$ and $d_i$ are taken from Table~\ref{tab5} and $(j_1,\ldots,j_r)$ is the $J$-invariant of $E$.
\end{thm}

\begin{rem}
Exactly the same statement holds, if we replace $B$ by a parabolic subgroup $P$ such that the variety $E/P$ is generically quasi-split.
\end{rem}

\end{document}